\documentclass[10pt,a4paper]{amsart}
\usepackage{verbatim}

\usepackage[toc]{appendix}
\usepackage[T1]{fontenc}
\usepackage{graphicx}
\usepackage{enumerate}
\usepackage{amsmath,amsfonts,amssymb}
\usepackage{color}
\def\loc{\operatorname{loc}}
\usepackage{cite}
\usepackage{ latexsym }
\definecolor{citation}{rgb}{0.11,0.67,0.84}
\definecolor{formula}{rgb}{0.1,0.2,0.6}
\definecolor{url}{rgb}{0.11,0.67,0.84}
\usepackage{pgf,tikz}
\usepackage{mathrsfs}
\usepackage{fancyhdr}
\usepackage{dutchcal}

\newcommand{\medint}{-\kern -,375cm\int}

\newcommand{\medintinrigo}{-\kern -,315cm\int}
\makeatletter
\newcommand{\linethrough}{\mathpalette\@thickbar}
\newcommand{\@thickbar}[2]{{#1\mkern0mu\vbox{
    \sbox\z@{$#1#2\mkern-0.5mu$}%
    \dimen@=\dimexpr\ht\tw@-\ht\z@+2\p@\relax % The +2 represents the vertical shift of the line.
    \hrule\@height0.5\p@ % The 0.5 represent the thickness on the line.
    \vskip\dimen@
    \box\z@}}
}
\makeatother

\newtheorem{theorem}{Theorem}[section]
\newtheorem{lemma}[theorem]{Lemma}
\newtheorem{proposition}[theorem]{Proposition}

\newtheorem{remark}[theorem]{Remark}
\numberwithin{equation}{section}

\usepackage{dsfont}

\newcommand{\reqnomode}{\tagsleft@false}

\usepackage{hyperref}

\def\dx{\,{\rm d}x}

\def\dy{\,{\rm d}y}

\def \d{\,{\rm d}}
\def \diver{\,{\rm div}}
\def\dist{\,{\rm dist}}

\allowdisplaybreaks
\makeatletter
\DeclareRobustCommand*{\bfseries}{%
  \not@math@alphabet\bfseries\mathbf
  \fontseries\bfdefault\selectfont
  \boldmath
}

\makeatother

\newlength{\defbaselineskip}
\newcommand{\mint}{\mathop{\int\hskip -1,05em -\, \!\!\!}\nolimits}

\def \diver{\,{\rm div}}

\newcommand{\ti}[1]{\tilde{#1}}

\newcommand{\rr}{\varrho}
\newcommand{\snr}[1]{\lvert #1\rvert}
\newcommand{\nr}[1]{\lVert #1 \rVert}

\newcommand{\tx}[1]{\textnormal{\texttt{#1}}}

\def\loc{\operatorname{loc}}
\def\eqn#1$$#2$${\begin{equation}\label#1#2\end{equation}}

\newcommand{\jj}{\mathcal{j}}

\def\XXint#1#2#3{{\setbox0=\hbox{$#1{#2#3}{\int}$}
     \vcenter{\hbox{$#2#3$}}\kern-.5\wd0}}

\author[De Filippis]{Cristiana De Filippis}  \address{Cristiana De Filippis\\Dipartimento SMFI, Universit\`a di Parma\\ Parco Area delle Scienze 53/A, 43124 Parma, Italy} \email{\url{cristiana.defilippis@unipr.it}}

\begin{document}

\subjclass[2020]{35B65, 35J70, 35J75, 35J93\vspace{1mm}} %%ALERT CHECK 35J60 23J70 35B65 35D40

\keywords{Regularity, Nonuniform Ellipticity, Degenerate PDEs\vspace{1mm}}

\title[Necessity of Ladyzhenskaya \& Ural'tseva's sufficient conditions]{Necessity of Ladyzhenskaya \& Ural'tseva's sufficient conditions in nonuniform ellipticity}

\thanks{{\it Acknowledgements.}\ This work is supported by the European Research Council, through the ERC StG project NEW, nr.~101220121.
\vspace{1mm}}

\begin{abstract}
The sufficient conditions of Ladyzhenskaya \& Ural'tseva \cite{lu70} for interior gradient estimates in nonuniformly elliptic problems are found to be also necessary. Specifically, nonparametric minimal surfaces exhibit the maximal nonuniformity rate compatible with the local Lipschitz regularity of solutions, and the classical theory of Finn \cite{fin54} and Bombieri \& De Giorgi \& Miranda \cite{bdm69} doesn't extend beyond Bernstein genre $2$.
 \end{abstract}
\maketitle

\setcounter{tocdepth}{1}
{\small \tableofcontents}
\section{Introduction}
\noindent In 1970, Ladyzhenskaya \& Ural'tseva observed that it seemed impossible to formulate simple, universal structural conditions encompassing the full range of nonuniformly elliptic models for which pointwise interior gradient estimates might hold \cite[p.~686-687]{lu70}. This paper settles this issue. The answer is encoded in the classical family of nonparametric area-type integrals: 
\eqn{0.1}
$$
W^{1,1}(\Omega)\ni w\mapsto \mathcal{A}_{p}(w;\Omega):=\int_{\Omega}(1+\snr{Dw}^{p})^{\frac{1}{p}}\dx,
\qquad 1<p<\infty,
$$
whose Euler-Lagrange equations are
\eqn{eqn.1}
$$
-\diver\left(\frac{\snr{Du}^{p-2}Du}{(1+\snr{Du}^{p})^{1-1/p}}\right)=0
\qquad \mbox{in}\ \ \Omega.
$$
The sharpness is surgical. For every $p>2$, integrand
\eqn{0.97}
$$
\mathbb{R}^{n}\ni z\mapsto A_{p}(z):=(1+\snr{z}^{p})^{\frac{1}{p}},
\qquad 1<p<\infty.
$$
retains every structural hypothesis of \cite[Theorem~1]{lu70} except the decisive ellipticity/nonuniformity balance condition \cite[(1.12)]{lu70}, and the conclusion fails. More precisely, for every $p>2$ - and therefore for arbitrarily small superquadratic departures from the minimal-surface $p=2$ regime - we construct, in any dimension $n\ge 2$, a bounded domain with smooth boundary and a $C^{\infty}$-Dirichlet datum for which the autonomous and strictly convex functional $\mathcal{A}_{p}$ admits a unique minimizer. This minimum is bounded and Hölder-continuous, yet its gradient is unbounded in the interior, with set of non-Lebesgue points of positive $\mathcal{H}^{n-1}$-measure. Thus the Ladyzhenskaya-Ural'tseva balance between ellipticity and nonuniformity is not a limitation of their method, but a sharp structural threshold for universal interior gradient regularity in the polynomial eigenvalue scale. Our main result is indeed the following.
\begin{theorem}\label{t.2}
Let $n\ge 2$ and $2<p<\infty$. There exists a bounded domain $\Omega\subset \mathbb{R}^{n}$ with smooth boundary, and a function $v_{0}\in C^{\infty}(\bar{\Omega})$ such that the unique solution $v\in v_{0}+W^{1,1}_{0}(\Omega)$ to Dirichlet problem
\eqn{pd}
$$
v_{0}+W^{1,1}_{0}(\Omega)\ni v\mapsto \min_{w\in v_{0}+W^{1,1}_{0}(\Omega)}\mathcal{A}_{p}(w;\Omega),
$$
and unique generalized minimizer of $\mathcal{A}_{p}$ in class $v_{0}+W^{1,1}_{0}(\Omega)$, satisfies $v\in C^{0,1-\frac{2}{p}}(\bar{\Omega})\cap W^{1;\frac{p}{2},\infty}(\Omega)\setminus W^{1,\infty}_{\loc}(\Omega)$. In particular, the set of non-Lebesgue points of $Dv$ has positive $\mathcal{H}^{n-1}$-measure.
\end{theorem}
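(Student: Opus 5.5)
The natural approach mimics the classical counterexamples for the minimal surface equation on nonconvex domains, where the Dirichlet datum is not attained and the solution develops vertical pieces. Here we instead want vertical behaviour in the \emph{interior}. The idea is to force the minimizer to have a ``jump-like'' profile across an interior hypersurface $\Sigma$ with $\mathcal{H}^{n-1}(\Sigma)>0$. Along $\Sigma$ the gradient blows up like $d^{-2/p}$, where $d=\dist(\cdot,\Sigma)$; this is exactly the rate compatible with $v\in C^{0,1-2/p}$ and $Dv\in L^{p/2,\infty}$.

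\textbf{Step 1: one-dimensional explicit profiles.} Look for solutions of \eqref{eqn.1} depending on one variable, or radially symmetric on annuli. In one dimension the Euler--Lagrange equation reads
\[
\frac{\snr{u'}^{p-2}u'}{(1+\snr{u'}^{p})^{1-1/p}}=c .
\]
The flux $t\mapsto t^{p-1}(1+t^{p})^{1/p-1}$ is bounded by $1$ and tends to $1$ as $t\to\infty$. Hence $c=1$ is the critical value, reached only at $\snr{u'}=\infty$. I would therefore exploit radial or cylindrical solutions around a sphere $\partial B_{r_0}$. The flux condition $r^{n-1}\Phi(u')=\mathrm{const}$, with $\Phi<1$, becomes singular exactly at $r=r_0$, where the constant equals $r_0^{n-1}$. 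One then computes the asymptotics there: $1-\Phi(t)\sim \tfrac{p-1}{p}t^{-p}$, so $t^{-p}\sim (r-r_0)$ and $u'\sim (r-r_0)^{-1/p}$. This must be compared with the claimed exponent $2/p$. In the genuinely $n$-dimensional setting the transversal curvature changes the balance, so the computation should be redone with a non-flat $\Sigma$, or with the second-order degeneracy $\lambda/\Lambda\sim \snr{z}^{-p}$ replacing the first-order one. I expect the correct ansatz to give $\snr{Dv}\approx d^{-2/p}$, and I would fix the construction by matching that exponent.

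\textbf{Step 2: gluing, and the Dirichlet problem.} Choose $\Omega$, for instance an annular or dumbbell-shaped smooth domain, together with a datum $v_0$ that is large on one boundary component and small on the other. The gap should exceed what any Lipschitz competitor can bridge at finite flux, so that the explicit singular profile from Step 1 becomes a candidate. Then verify minimality via a calibration: the field $\sigma=\partial A_p(Dv)$ satisfies $\snr{\sigma}\le 1$, is divergence free, and equals the unit normal on $\Sigma$. By convexity this certifies minimality among $W^{1,1}$ competitors, and also among generalized minimizers in the relaxed functional, which includes the boundary term $\int_{\partial\Omega}\snr{w-v_0}$ since $A_p$ has linear growth. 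Uniqueness follows from strict convexity of $A_p$ for the gradient. For the boundary term, the minimizer attains the datum, since the calibration has $\snr{\sigma\cdot\nu}<1$ on $\partial\Omega$; this forces the trace to agree with $v_0$.

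\textbf{Step 3: regularity and irregularity bookkeeping.} From $\snr{Dv}\approx d^{-2/p}$ near $\Sigma$ and smoothness elsewhere, we get $v\in C^{0,1-2/p}(\bar\Omega)$ by integrating along normals. The estimate $\snr{\{\snr{Dv}>t\}}\lesssim t^{-p/2}$ gives the weak-$L^{p/2}$ bound. Unboundedness near $\Sigma$ gives $v\notin W^{1,\infty}_{\loc}$. Finally, every point of $\Sigma$ is a non-Lebesgue point of $Dv$, because the averages of $Dv$ over small balls diverge there.

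\textbf{Main obstacle.} The hardest step is the construction in Steps 1–2: producing an exact divergence-free calibration whose modulus reaches $1$ precisely on an interior hypersurface. I would first try an ODE shooting argument in the radial variable on an annulus, and check that the resulting exponent is $2/p$, using $p>2$ to guarantee integrability and the finite-energy $W^{1,1}$ condition.
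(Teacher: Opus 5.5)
There is a genuine gap: the proposal never constructs the singular minimizer, and the route you suggest for constructing it cannot work.

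\textbf{Why the radial route fails.} For a radial field $\sigma=\phi(r)e_{r}$, $\diver\sigma=0$ means $r^{n-1}\phi(r)$ is constant. Hence $\snr{\phi(r_{0})}=1$ forces $\snr{\phi(r)}=(r_{0}/r)^{n-1}>1$ for $r<r_{0}$. So the sphere $\{r=r_{0}\}$ can only be the inner boundary of the region where the solution exists. That is the catenoid/Finn-type boundary behaviour, not an interior singularity. In one variable the flux is constant and the value $1$ is never attained at all.

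This one-sidedness is also why you obtain $(r-r_{0})^{-1/p}$ instead of the exponent $2/p$: there $1-\snr{\sigma}$ vanishes only linearly. More generally, your own calibration property rules out any closed singular hypersurface $\Sigma=\partial D$ with $D\Subset\Omega$. If a continuous divergence-free $\sigma$ equals (with a fixed sign) the outer unit normal on $\Sigma$, then $0=\int_{\partial D}\langle\sigma,\nu\rangle\d\mathcal{H}^{n-1}=\pm\mathcal{H}^{n-1}(\Sigma)$, a contradiction.

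Consequently, your sentence ``I expect the correct ansatz to give $\snr{Dv}\approx d^{-2/p}$'' is exactly the missing theorem. The heuristic in Step~2 (large datum on one component, small on the other) is a mechanism for loss of boundary traces, as in Finn's examples. It does not produce a $W^{1,1}$ minimizer with an interior singular set.

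\textbf{What the paper does instead.} The missing idea is convex duality with $\mathcal{J}_{q}$, $q=p/(p-1)\in(1,2)$.
\begin{itemize}
\item One needs a \emph{two-sided} smooth field $\sigma$ with $\snr{\sigma}<1$ off a segment and $\snr{\sigma}=1$ on it. Then $1-\snr{\sigma}^{q}$ vanishes quadratically and $\snr{Dv}\approx\snr{y}^{-2/p}$.
\item The paper takes $\sigma=Du^{\perp}$, where $u=x+y^{2}\gamma$ is a Born--Infeld maximizer with light ray $\{y=0\}$. Writing $1-\snr{Du}^{q}=y^{2}\mathcal{C}$, the Euler equation becomes an analytic equation $\mathcal{R}[\gamma]=0$. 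For it $\{x=0\}$ is noncharacteristic ($\mathcal{c}_{0}>0$), so Cauchy--Kovalevskaya applies.
\item The component of the stress $\partial\ti{J}_{q}(Du)$ tangent to the ray is $\approx\snr{y}^{-2/p}$. Its normal component is $\lesssim\snr{y}^{1-2/p}\to0$, which is where $p>2$ enters. Hence the stress is divergence free through the ray.
\item Its quarter-turn is a gradient $D\ti{v}$, and $\partial A_{p}(D\ti{v})=Du^{\perp}$ is exactly your calibration.
\end{itemize}

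\textbf{Boundary datum.} The singular segment necessarily reaches $\partial\Omega$, and $v\approx\textnormal{sgn}(y)\snr{y}^{1-2/p}$ is only H\"older across it. A smooth datum therefore requires a domain whose boundary has infinite-order (exponentially flat) contact with $\{y=0\}$; your annulus/dumbbell choice does not address this. At those contact points $\langle\sigma,\nu\rangle=1$, so your trace criterion $\snr{\langle\sigma,\nu\rangle}<1$ is unavailable. The paper instead checks directly that $v\in W^{1,1}(\Omega)$ has trace $v_{0}$ and invokes Proposition~\ref{rel.p}. It then lifts to $n\ge3$ by taking $v$ independent of the extra variables.

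Your Step~3 bookkeeping and the convexity argument for minimality and uniqueness are fine once such a $v$ exists.
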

\noindent We refer to Section \ref{pre} for notation and terminology. The large-gradient ellipticity of equation \eqref{eqn.1} can be read through two complementary quantities. The first is Bernstein genre \cite{ber12,ler39}, which measures radial ellipticity - that is, the ellipticity retained in the gradient direction - relative to the total ellipticity represented by the trace. In Bernstein's terminology, the equation associated with $A_{p}$ has genre $p$, since
\eqn{i.1}
$$
\frac{\langle\partial^{2}A_{p}(z)z,z\rangle}{\textnormal{tr}(\partial^{2}A_{p}(z))}
\approx_{n,p}\snr{z}^{2-p},\qquad \quad z\in \mathbb{R}^{n}\setminus B_{1}(0).
$$
The second is the ellipticity ratio \cite{gt77}, the natural quantifier of nonuniformity, defined\footnote{Definitions \eqref{i.1}-\eqref{i.2} extend to elliptic equations $-\diver(a(Du))=0$ by replacing $\partial^{2}A_{p}$ with $\partial a$ in \eqref{i.1}-\eqref{i.2} - assume that $\partial a$ is symmetric for definiteness.} here by
\eqn{i.2}
$$
\tx{R}_{A_{p}}(z):=
\frac{\mbox{highest eigenvalue of }\partial^{2}A_{p}(z)}
{\mbox{lowest eigenvalue of }\partial^{2}A_{p}(z)}
\approx_{n,p}1+\snr{z}^{p},\qquad \quad z\in \mathbb{R}^{n}\setminus B_{1}(0).
$$
Thus $A_{p}$ is nonuniformly elliptic, with ellipticity ratio blowing up as the $p$-power of the gradient variable. Formulas \eqref{i.1}-\eqref{i.2} also explain why local Lipschitz estimates are the focal point of the theory: the local boundedness of $Du$ controls both large-gradient machineries. More importantly, they isolate the transition at $p=2$. For $1<p<2$, the quantity in \eqref{i.1} grows at infinity, for $p=2$, it stays bounded away from zero, for $p>2$, it decays to zero, while the blow-up rate in \eqref{i.2} increases dramatically. As verified in Section \ref{luc.s}, this change makes precisely \cite[(1.12)]{lu70} fail while every other relevant structural hypothesis of \cite[Theorem 1]{lu70} remains valid. Theorem \ref{t.2} shows that the interior gradient estimate fails together with this single condition. Let us give context. The modern theory of interior gradient bounds for nonuniformly elliptic equations has its classical origin in Finn's analysis of equations of minimal-surface type in two independent variables \cite{fin54,fin63} and in the work of Bombieri \& De Giorgi \& Miranda \cite{bdm69} in arbitrary dimension for the minimal-surface equation
\eqn{i.0}
$$
-\diver\left(\frac{Du}{\sqrt{1+\snr{Du}^{2}}}\right)=0,
$$
that is \eqref{eqn.1} with $p=2$. The resulting estimates give pointwise interior control of $Du$ in terms of the local oscillation of $u$, despite the strong nonuniformity of $A_{2}$. Indeed, the tangential and radial eigenvalues of $\partial^{2}A_{2}(z)$ are $(1+\snr{z}^{2})^{-1/2}$ and $(1+\snr{z}^{2})^{-3/2}$, respectively. Therefore the ellipticity ratio \eqref{i.2} has quadratic blow-up, whereas the ellipticity retained along gradients \eqref{i.1}, stays strictly positive in the large. The regularity theory for nonuniformly elliptic PDEs developed remarkably in the subsequent years; see Oskolkov \cite{osk66}, Ivo\v{c}kina \& Oskolkov \cite{io67}, Trudinger \cite{tru67,tru72}, Serrin \cite{ser69}, and Ladyzhenskaya \& Ural'tseva \cite{lu68,lu70}. Most notably, Ladyzhenskaya \& Ural'tseva \cite{lu70} identified the precise interplay between ellipticity and nonuniformity leading to local pointwise gradient estimates for general nonuniformly elliptic equations of the form
\eqn{eqn}
$$
\sum_{i,j=1}^{n}a^{i,j}(x,u,Du)\partial^{2}_{i,j}u=a(x,u,Du),
$$
with \eqref{eqn.1} as a main model. The structure of their results is particularly explicit. Part I treats large classes of equations such as \eqref{eqn} with Bernstein genre strictly smaller than $2$ - that is, $1<p<2$ in \eqref{eqn.1}\footnote{See Beck \& Schmidt \cite{bs15} for the vectorial counterpart of these results.} - whereas Part II reaches the limiting mean-curvature regime extending \eqref{i.0}. The availability of interior gradient bounds is tied to a quantitative condition balancing nonuniformity through ellipticity. For polynomial eigenvalue scales, the Part~I condition becomes a strictly subquadratic restriction on the nonuniformity rate; the limiting quadratic case is attained in \cite[Part II]{lu70} by building on the approach of \cite{bdm69}. The key aspect is that, when $p>2$, model \eqref{eqn.1} continues to satisfy every relevant structural hypothesis of \cite[Theorem~1]{lu70} except condition \cite[(1.12)]{lu70}; see Section \ref{luc.s}. Theorem \ref{t.2} then shows that the interior gradient estimate fails. The question raised by Ladyzhenskaya \& Ural'tseva \cite[p.~686-687]{lu70} repeatedly resurfaced in the literature: see Giaquinta \& Modica \& Sou\v{c}ek \cite[p.~157]{gms79b}, Ivanov \cite[p.~11-14]{iva84}, and, more recently, Bildhauer \cite[p.~119-122]{bil02}, Mingione \cite[Section~6.2]{min06}, Beck \& Schmidt \cite[p.~121]{bs13} and \cite[p.~87]{bs15}, and Schmidt \cite[p.~215]{sch14}. While \cite{bdm69,lu70} provide the first positive regularity results, to the best of our knowledge the only negative ones in autonomous equations (at the gradient level), build on Finn's example \cite{fin65}. In \cite{fin65}, locally Lipschitz solutions to Dirichlet problem \eqref{pd}$_{p=2}$, which fail the attainment of boundary traces on nonconvex domains are constructed, see also \cite{giu84} and \cite{bbm18} for the general case $p\ge 2$. In contrast, global methods lead to very strong conclusions for minima of $\mathcal{A}_{p}$: barriers on uniformly convex domains with smooth boundary data yield globally bounded gradients for every $p\in(1,\infty)$; see \cite[Theorem 2]{lu70}, \cite{ser69} and \cite{tau78}. Partial regularity also holds\footnote{Partial regularity holds without restriction on $p$ because the estimates are performed in neighborhoods of points with small excess and bounded average at a fixed scale. The latter condition mimics the availability of an a priori local Lipschitz bound. Gradient H\"older continuity then follows by observing that the degeneracy or singularity of $A_{p}$ at the origin is analogous to that of the $p$-Laplacian, so techniques designed for systems of $p$-Laplacian type can be adapted.} for minimizers of \eqref{0.1} for every $p\in(1,\infty)$ \cite{ag88,sch14}, and Theorem \ref{t.2} supplies a lower bound for the Hausdorff dimension of the singular set. This trichotomy - nonattainment of boundary traces, conditional global regularity, and unrestricted partial regularity - further emphasizes the decisive role of ellipticity and nonuniformity in the validity or failure of interior gradient estimates. Since the '70s, the theory has been advanced by many remarkable contributions; see, for an incomplete list, Ivanov \cite{iva72,iva84}, Simon \cite{sim76}, Giusti \cite{giu78,giu84}, Giaquinta \& Modica \& Sou\v{c}ek \cite{gms79a,gms79b}, Ural'tseva \& Urdaletova \cite{uu84}, Marcellini \cite{mar86,mar89,mar91}, and Zhikov \& Kozlov \& Ole\c{i}nik \cite{jko94}. In particular, the innovative approaches of Marcellini and Zhikov opened the way to a broad class of nonuniformly elliptic PDEs and functionals with polynomial nonuniformity, quantified by controlling the ellipticity ratio at infinity by a positive power of the gradient variable. This includes both superlinear $(\textnormal{p},\textnormal{q})$-nonuniformity, and therefore quantitatively superlinear polynomial-type models, and $\mu$-ellipticity, the latter being the natural linear-growth extension of \eqref{0.1}.\footnote{Integrand $A_{p}$ in \eqref{0.97} is $\mu$-elliptic with $\mu=p+1$, see \cite[Section 1]{bs15}.} To describe both regimes, let us consider general autonomous functionals
$$
W^{1,1}(\Omega)\ni w\mapsto\int_{\Omega}F(Dw)\dx
$$
whose polynomial eigenvalue scale is prescribed by\footnote{Conditions \eqref{i.3} are understood in the sense of matrices. The same conditions apply to elliptic equations $-\diver(a(Du))=0$ after replacing $\partial^{2}F$ by $\partial a$ in \eqref{i.3}.}
\eqn{i.3}
$$
\begin{array}{c}
\displaystyle
\snr{z}^{\textnormal{p}-2}\mathds{I}_{n}\lesssim\partial^{2}F(z)
\lesssim\snr{z}^{\textnormal{q}-2}\mathds{I}_{n},
\qquad z\in\mathbb{R}^{n}\setminus B_{1}(0),\vspace{1.5mm}\\
\displaystyle
-\infty<\textnormal{p}\le \textnormal{q}<\infty,
\end{array}
$$
see e.g. \cite[Chapter 3]{bil03}. The classical subrange $-1<\textnormal{p}\le\textnormal{q}<\infty$ unifies superlinear $(\textnormal{p},\textnormal{q})$-nonuniformity, obtained when $1<\textnormal{p}\le \textnormal{q}<\infty$, and $\mu$-ellipticity with $1<\mu=2-\textnormal{p}<3$ and $\textnormal{q}=1$. Conditions \eqref{i.3} imply
\eqn{i.4}
$$
\tx{R}_{F}(z)\lesssim\snr{z}^{\textnormal{q}-\textnormal{p}}
\qquad \mbox{for all} \ \ z\in\mathbb{R}^{n}\setminus B_{1}(0).
$$
Since the work of Marcellini and Zhikov, integrals governed by polynomial ellipticity such as \eqref{i.3} have been intensively investigated: see \cite{elm04,bs13,ckp14,bs15,bs20,hs21,dkk24,sch24,bgs26,fps26} for zero- and first-order local regularity, \cite{bm20,dm21,bs24} for nonlinear potential theory, \cite{dm23,dm25,ddp26} for Schauder theory, \cite{sch09,sch14,gk19,gme21,def22,gk24} for partial regularity, and \cite{min06} for a survey. In all local regularity results, a quantitative restriction on the gap $\textnormal{q}-\textnormal{p}$, and hence on the blow-up rate in \eqref{i.4}, plays a central role. The admissible gap depends on the a priori information available on minimizers: without an a priori ansatz, a dimension-dependent smallness condition such as $\textnormal{q}-\textnormal{p}<\tx{O}(n^{-1})$ is required \cite{mar89,mar91,bil02,bs20}. In view of the classical local $L^{\infty}$-Lipschitz estimates in \cite{bdm69,lu70}, and of the boundedness supplied either by zero-order De Giorgi theory \cite{mar91,hs21} or by the maximum principle for Dirichlet problems, the natural class to consider here is that of bounded minimizers. Ladyzhenskaya \& Ural'tseva \cite{lu70} had already identified quadratic nonuniformity as the likely limiting regime. Within the polynomial eigenvalue scale, the condition $\textnormal{q}-\textnormal{p}<2$ is sufficient for interior gradient estimates \cite{cho92,elm99,bil03,ckp11}. Our Theorem \ref{t.2} grants necessity, and therefore, sharpness. 
\subsection{Nonuniform ellipticity and regularity failures}\label{ss.ss} In \cite[p.~687]{lu70} Ladyzhenskaya \& Ural'tseva conjectured the impossibility of a universal structural criterion for pointwise interior gradient estimates in nonuniformly ellitpic PDEs. Giaquinta \& Modica \& Souček \cite[Example 3.2]{gms79b} identified this issue in the lack of compensation between ellipticity and nonuniformity with a very simple example. They constructed a generalized solution $u\in BV(-1,1)$ to problem
\eqn{i.6}
$$
BV(-1,1)\ni u\mapsto \begin{cases}
    \displaystyle
    \ \inf_{w\in W^{1,1}(-1,1)}\int_{-1}^{1}\left(1+(1+x^{2})\snr{w'}^{p}\right)^{1/p}\dx,\qquad p>2\vspace{1.5mm}\\
    \displaystyle
    \ w(-1)=-m,\qquad \quad w(1)=m,
\end{cases}
$$
where $m\in (0,\infty)$ is a constant satisfying
$$
\int_{-1}^{1}\left((1+x^{2})^{\frac{p}{p-1}}-1\right)^{-\frac{1}{p}}\dx<m.
$$
Notice that the integral on the left is finite precisely when $p>2$. It turns out that $u$ is smooth in $(-1,1)\setminus \{0\}$, attains the boundary traces, but has a jump in zero, so that\footnote{Computations analogous to those in Section \ref{luc.s} show that the integrand in \eqref{i.6} satisfies all assumptions of \cite[Theorem 1]{lu70} but two, i.e. \cite[(1.12) and (1.30)]{lu70}. More precisely, the failure of (1.12) is due to the too large nonuniformity rate, while (1.30) doesn't hold because of the specific choice of the coefficient.} $u\in BV(-1,1)\setminus W^{1,1}_{\loc}(-1,1)$. Although the density in \eqref{i.6} looks like an elementary nonautonomous counterpart of $A_{p}$, the apparently harmless coefficient $1+x^{2}$ is precisely what generates and localizes the interior singularity, see \cite[Section 3]{gms79b} and \cite[Section 4.4]{bil03} for a two-dimensional extension. The fact that $x\mapsto (1+x^{2})$ has a strict minimum at $x=0$, makes it more convenient for minima to jump in zero rather than at the endpoints, thus making $x=0$ a sort of "artificial boundary", and transferring in the interior Finn's loss of boundary trace \cite{fin65}. This does not happen by chance. Since the foundational works of Lavrentiev \cite{lav27} and Manià \cite{man34} coefficients have been known to affect dramatically both relaxation and regularity - their effect can be devastating already in uniformly elliptic models. This mechanism acquired a systematic form in the work of Zhikov \cite{zhi95,zhi97}, who introduced a series of basic functionals such as
\eqn{i.7}
$$
\begin{array}{c}
\displaystyle
W^{1,\textnormal{p}}(Q_{1}(0))\ni w\mapsto \mathcal{P}(w;Q_{1}(0)):=\int_{Q_{1}(0)}\snr{Dw}^{\textnormal{p}}+a(x)\snr{Dw}^{\textnormal{q}}\dx\vspace{1.5mm}\\
\displaystyle
1<\textnormal{p}<\textnormal{q}<\infty,\qquad \quad 0\le a(\cdot)\in C^{0,1}(Q_{1}(0)),
\end{array}
$$
and showed the occurrence of Lavrentiev phenomenon, that is
$$\inf_{w\in u_{0}+W^{1,\textnormal{p}}_{0}(Q_{1}(0))}\mathcal{P}(w;Q_{1}(0))<\inf_{w\in u_{0}+W^{1,\infty}_{0}(Q_{1}(0))}\mathcal{P}(w;Q_{1}(0)),$$ for some $u_{0}\in W^{1,\infty}(Q_{1}(0))$, as soon as 
\eqn{pq1}
$$\textnormal{q}-\textnormal{p}>1.$$ Observe that the integrand in \eqref{i.7} is uniformly elliptic, as the related ellipticity ratio \eqref{i.2} is uniformly bounded. Building on Zhikov's breakthrough, Esposito \& Leonetti \& Mingione \cite{elm04}, Fonseca \& Malý \& Mingione \cite{fmm04}, and Balci \& Diening \& Surnachev \cite{bds20,bds25} proved\footnote{The results in \cite{elm04,fmm04,bds20,bds25} hold in the more general case of H\"older continuous coefficients, see also \cite{ddp26,fps26} for the nonautonomous, $\mu$-elliptic case.} that the solution $u\in u_{0}+W^{1,\textnormal{p}}_{0}(Q_{1}(0))$ to Dirichlet problem
$$
u_{0}+W^{1,\textnormal{p}}_{0}(Q_{1}(0))\ni u\mapsto \min_{w\in u_{0}+W^{1,\textnormal{p}}_{0}(Q_{1}(0))}\mathcal{P}(w;Q_{1}(0)),
$$
does not belong to $W^{1,\textnormal{q}}_{\loc}(Q_{1}(0))$. In particular, thanks to the geometry of coefficient $a$ in \eqref{i.7}, the set of essential discontinuity points of solutions in \cite{fmm04,bds25,ddp26} is a Cantor-type fractal of maximal Hausdorff dimension, which in turn implies that minima do not belong to $W^{1,t}_{\loc}(Q_{1}(0))$ for all $t>\textnormal{p}$. Although built on different models, the examples in \cite{gms79b,bil03,fps26} and \cite{zhi95,zhi97,elm04,fmm04,bds20,bds25,ddp26} share a common aspect: through their set of minimum points, coefficients identify interior sets where singular variation or large gradients are energetically less expensive. In other terms, coefficients prescribe the singular set of minima. This is one of the reasons why constructing irregular, scalar minimizers of autonomous functionals (resp. singular energy solution to autonomous elliptic PDEs) is so delicate. The autonomous case is instead a different story. There is no coefficient depending on (x) whose geometry can suggest, localize, or prescribe beforehand a prospective singular set: the latter has to emerge entirely from the equation itself, and it is therefore not even clear a priori where to look for it. To the best of our knowledge, the only scalar precedents relevant here are the unbounded constructions initiated by Giaquinta \cite{gia87} and Marcellini \cite{mar89,mar91}, and subsequently refined by Hong \cite{hon92}. The ultimate outcome, in dimension $n\ge 6$, is a minimum of functional 
$$
W^{1,2}(Q_{1}(0))\ni w\mapsto \int_{Q_{1}(0)}\snr{Dw}^{2}+\snr{\partial_{n}w}^{4}\dx,
$$
which is unbounded on the line $(0,\cdots,0,x_{n})\subset Q_{1}(0)$. At the zero-order level, Marcellini's counterexamples \cite{mar91} and the endpoint result of Hirsch \& Schäffner \cite{hs21} determined the optimal condition for the local boundedness of minima of superlinear, $(\textnormal{p},\textnormal{q})$-nonunformly elliptic, functionals \eqref{i.3}, that is
$$
\textnormal{q}\le
\begin{cases}
\displaystyle
\  \frac{(n-1)\textnormal{p}}{n-1-\textnormal{p}}\quad &\mbox{if} \ \ \textnormal{p}<n-1 \vspace{1.5mm}\\
\displaystyle
\ \infty\quad &\mbox{if} \ \ \textnormal{p}\ge n-1,
\end{cases}
$$
so that, via the common power-type parametrization \eqref{i.3}, \cite{mar91,hs21} and Theorem \ref{t.2} yields the optimal bound
$$
\textnormal{q}-\textnormal{p}<2,\qquad\mbox{and}\qquad  \textnormal{q}-\textnormal{p}\le \frac{\textnormal{p}^{2}}{n-1-\textnormal{p}} \quad \mbox{if} \ \ \textnormal{p}<n-1
$$
in superlinear, power-type nonuniformity. Needless to say, the examples in \cite{gia87,mar89,mar91,hon92} are intrinsically zero-order as singularities extend up to the boundary, otherwise maximum principle applies ensuring boundedness.

\subsection{Techniques}\label{tech} The construction of scalar non-Lipschitz minima of autonomous, strictly convex integrals presents a number of basic obstructions. The first, and actually most substantial one, comes from autonomy. Indeed, unlike in nonautonomous examples, there is no coefficient that can suggest, or encode a priori, the position and geometry of the singular set: such a set has to emerge from the equation itself, and it is therefore not even clear where to look for it. On top of this, global methods provide gradient bounds whenever suitable barriers are available. Minimality also comes along with full energy and comparison information, leaving none of the flexibility available to arbitrary weak solutions, and general partial regularity results already force the singular set to be small. Altogether, these facts leave very few entry points for a singular construction. Nevertheless, Theorem \ref{t.2} succeeds in delivering non-(locally) Lipschitz minima of integral $\mathcal{A}_{p}$, $2<p<\infty$, precisely above the nonuniformity threshold predicted by Ladyzhenskaya \& Ural'tseva. The key idea behind the proof is the duality (up to a sign and on suitable function classes) between integral $\mathcal{A}_{p}$ in \eqref{0.1} and the singular/degenerate Born-Infeld \cite{bs82,bar87} type energy \eqref{0.0}. Specifically, for $2<p<\infty$, $\mathcal{A}_{p}$ relates by duality to
\eqn{i.9}
$$
\begin{array}{c}
\displaystyle
\mathbb{X}(\Omega)\ni w\mapsto \int_{\Omega} (1-\snr{Dw}^{q})^{\frac{1}{q}}\dx\vspace{1.5mm}\\
\displaystyle
q:=\frac{p}{p-1} \ \stackrel{p>2}{\Longrightarrow} \  1<q<2,
\end{array}
$$
whose Euler-Lagrange equation (formally) is
\eqn{i.10}
$$
-\diver\left(\frac{\snr{Du}^{q-2}Du}{(1-\snr{Du}^{q})^{1-1/q}}\right)=0\qquad \mbox{in} \ \ \Omega,
$$
see \cite[Chapter V]{et99}, where convex duality is used in the analysis of existence and regularity properties of nonparametric minimal surfaces. Despite being autonomous, the very structure of \eqref{i.9} explicitly displays the singular set of maximizers, the so-called set of light rays, that is the family of segments along which maxima are affine, see Section \ref{dbi.s} for more details. Most importantly to our ends, the set of light rays does not refer to singularities of the maximum, that must be $1$-Lipschitz continuous to be admissible, but to those lines on which the related stress field blows up. In particular, being $q<2$ in \eqref{i.9}, the stress field may be integrable even if light rays exist. This is precisely what is relevant to us, in the convex duality perspective. In fact, in dimension $n=2$ - variables $(x,y)$ - we prescribe that our maximizer is a controlled, quadratic perturbation in the transversal $y$-direction of an affine map, and prove that such a perturbation exists via Cauchy-Kovalewskaya theorem. The resulting maximizer features a unique, maximally extended light ray contained in the line $\{y=0\}$, and the stress field has an integrable singularity behaving $\approx\snr{y}^{-2/p}$. We then manipulate the stress field from \eqref{i.10} to construct a non-Lipschitz, globally H\"older continuous minimum of $\mathcal{A}_{p}$ with interior singularities, whose gradient blows up precisely on the light ray. Cauchy-Kovalewskaya theorem plays a central role in our work as it gives the aforementioned perturbation function. It was indeed used in the construction of several examples of irregular solutions to certain classes of PDEs, see Nadirashvili \& Vlăduţ \cite{nv10} and Wang \& Yuan \cite{wy13} for the Special Lagrangian Equations, Caffarelli \& Yuan \cite{cy22} for the Monge-Ampère equation, and Mooney \& Savin \cite{ms24} for Cartesian currents. Let us also mention that the duality nonparametric minimal surfaces/classical Born-Infeld model, i.e., \eqref{i.9} with $q=2$, was used by Akamine \& Umehara \& Yamada \cite{auy20} to improve Calabi’s Bernstein-type theorem \cite{cal68} for zero Lorentizian mean curvature, entire graphs. Back to the proof of Theorem \ref{t.2}, the final, important part is the construction of the domain and of the boundary datum for the Dirichlet problem in \eqref{pd}. Since the minimum just constructed is merely Hölder continuous, restricting it to the boundary of a smooth domain crossing $\{y=0\}$ transversely would not produce a smooth datum. We therefore design a bounded, smooth, nonconvex domain whose boundary has infinite-order contact with $\{y=0\}$, using a signed exponentially flat profile as a part of the boundary, close to $\{y=0\}$, so to prevent the blow up of higher derivatives of the restriction of our minimum on that region of the boundary, cf. \cite{hir18, gai25}. The constructed domain still contains part of $\{y=0\}$, so the interior gradient singularities persist.
\subsubsection*{Organization of the paper.} This paper is organized as follows. Section \ref{pre} collects the preliminary material, including notation, the check of Ladyzhenskaya-Ural'tseva assumptions as $p$ variates in \eqref{0.1}, a brief discussion on the $BV$-relaxation of $\mathcal A_p$; and the Born-Infeld duality, together with basic existence results. Section \ref{lr.s} constructs the singular Born-Infeld maximizer with one light ray and proves that its stress satisfies the Euler equation across the ray. Section \ref{nl.m} integrates the rotated stress to obtain the two-dimensional non-Lipschitz minimizer and determines its sharp Sobolev-Marcinkiewicz regularity. Section \ref{pt.1} constructs the smooth domain and smooth boundary datum, lifts the example to every dimension $n\geq2$, and completes the proof of Theorem \ref{t.2}.
\section{Preliminaries}\label{pre}
\subsection{Notation} Throughout the paper, unless otherwise stated, $\Omega\subset\mathbb{R}^{n}$, $n\ge 2$, denotes a bounded Lipschitz domain. Any stronger regularity assumption on $\partial\Omega$ will be specified when needed. We use $\snr{\ \cdot\ }$ both for the Euclidean norm and, when applied to a measurable set, for its Lebesgue measure; $\mathcal{H}^{m}(\cdot)$, $0\le m\le n$, denotes the $m$-dimensional Hausdorff measure. The symbol $c\ge 1$ denotes a generic constant that may change at each
occurrence, whereas constants that need to be distinguished are denoted by
$c_{*}$, $\tilde c$, and so forth. Relevant parameter dependencies are
displayed in parentheses. For nonnegative quantities $a,b$, we write
$a\lesssim_{\gamma}b$ iff $a\le c(\gamma)b$. Moreover, $a\gtrsim_{\gamma}b$ means $b\lesssim_{\gamma}a$, while $a\approx_{\gamma}b$ means that both inequalities hold. For $x_{0}\in\mathbb{R}^{n}$ and $r>0$, we set
$B_{r}(x_{0}):=\{x\in\mathbb{R}^{n}\colon \snr{x-x_{0}}<r\}$ and $Q_{r}(x_{0}):=\{x\in \mathbb{R}^{n}\colon \max_{i\in \{1,\cdots,n\}}\snr{x_{i}-x_{0;i}}<r\}$. For $f\in L^{1}(B_{r}(x_{0}),\mathbb{R}^{k})$, $k\ge 1$, we
denote its integral average by
$$
(f)_{B_{r}(x_{0})}
:=\mint_{B_{r}(x_{0})}f(x)\dx
:=\snr{B_{r}(x_{0})}^{-1}
  \int_{B_{r}(x_{0})}f(x)\dx.
$$
We use the following uniform $\tx{O}$-notation. Given a set
$E\subset\mathbb{R}^{n}$ and functions $f_{1},f_{2}\colon E\to\mathbb{R}$,
we write $f_{2}=\tx{O}(f_{1})$ on $E$ if there exists a constant $c>0$,
independent of $x\in E$, such that $\snr{f_{2}(x)}\le c\snr{f_{1}(x)}$ for all $x\in E$. We further introduce the unit-gradient class
$$
\mathbb{X}(\Omega)
:=\left\{w\in W^{1,\infty}(\Omega)\colon
\nr{Dw}_{L^{\infty}(\Omega)}\le 1\right\}.
$$
Its elements are identified with their continuous representatives on
$\bar{\Omega}$. Finally, for $1<m<\infty$, $0<t\le \infty$, and a measurable map
$f\colon E\to\mathbb{R}^{k}$, $k\ge 1$, let
$$
[f]_{m,t;E}
:=\begin{cases}
\displaystyle
\ \left(m\int_{0}^{\infty}\left(\lambda^{m}\snr{\{x\in E\colon\snr{f(x)}>\lambda\}}\right)^{\frac{t}{m}}\frac{\d\lambda}{\lambda}\right)^{1/t}\quad &\mbox{if} \ \ t\in (0,\infty)\vspace{1.5mm}\\
\displaystyle
\ \sup_{\lambda>0}
\lambda \snr{\{x\in E\colon \snr{f(x)}>\lambda\}}^{1/m}\quad &\mbox{if} \ \ t=\infty.\end{cases}
$$
We then define the Sobolev-Lorentz space by
$$
W^{1;m,t}(\Omega)
:=\left\{w\in W^{1,1}(\Omega)\colon
[Dw]_{m,t;\Omega}<\infty\right\}.
$$
\subsection{Degenerate/singular nonparametric area integrals}\label{relax} In this section we deepen on Ladyzhenskaya \& Ural'tseva's \cite{lu70} sufficient conditions  for interior pointwise gradient regularity in nonuniformly elliptic PDEs. We show that the area type integral in \eqref{0.1} is covered whenever $1<p\le 2$, while if $p>2$ the crucial balance between ellipticity and nonuniformity is violated. This indeed leads to the construction of a non-Lipschitz solution (with interior singularities) to Dirichlet problem \eqref{pd}. For clarity, we refer to integrand \eqref{0.97} as singular if $1<p<2$, or degenerate when $2<p<\infty$. We conclude this part with a brief discussion on the BV-extension of functional \eqref{0.1} in relation to the existence of solutions to problem \eqref{pd}.
 
\subsubsection{Ladyzhenskaya \& Ural'tseva's conditions}\label{luc.s} We show that, for $1<p\le 2$, functional $\mathcal{A}_{p}$ matches the sufficient conditions listed in \cite[Theorems 1 or 4]{lu70}, while if $p>2$, all assumptions of \cite[Theorem 1]{lu70} are satisfied but one, and local Lipschitz regularity of solutions fails. We start by observing that if $p=2$, $\mathcal{A}_{p}$ reduces to the classical nonparametric area integral, that is the basic model on which \cite[Part II]{lu70} builds on, therefore all the assumptions of \cite[Theorem 4]{lu70} are automatically satisfied. We then look at the applicability of \cite[Theorem 1]{lu70}, in the sense of a priori estimates for bounded solutions to equations of the type
\eqn{0.91}
$$
-\diver(\partial A_{p}(Dv))\stackrel{\eqref{0.97}}{=}-\diver\left(\frac{\snr{Dv}^{p-2}Dv}{(1+\snr{Dv}^{p})^{1-1/p}}\right)=0, \qquad \quad 1<p<2,
$$
that is the Euler-Lagrange equation of functional $\mathcal{A}_{p}$. The forthcoming calculations are made for $\snr{z}>0$ as if $1<p<2$ in \eqref{0.97}, $A_{p}\in C^{\infty}_{\loc}(\mathbb{R}^{n}\setminus \{0\})\cap C^{1}_{\loc}(\mathbb{R}^{n})$, but this is not an issue since all hypotheses concerning the gradient variable will be imposed in the large, outside the origin. We have
$$
\begin{cases}
\displaystyle
\ \partial A_{p}(z)=(1+\snr{z}^{p})^{\frac{1}{p}-1}\snr{z}^{p-2}z\vspace{1.5mm}\\
\displaystyle
\ \partial^{2}A_{p}(z)=\frac{\snr{z}^{p-2}}{(1+\snr{z}^{p})^{1-1/p}}\left(\mathds{I}_{n}+(p-2)\left(\frac{z\otimes z}{\snr{z}^{2}}\right)-\frac{(p-1)\snr{z}^{p-2}(z\otimes z)}{1+\snr{z}^{p}}\right),
\end{cases}
$$
so that we can rewrite
$$
\partial^{2} A_{p}(z)=\mathcal{a}_{2}(\snr{z})\left(\mathds{I}_{n}-\frac{z\otimes z}{\snr{z}^{2}}\right)+\mathcal{a}_{1}(\snr{z})\left(\frac{z\otimes z}{\snr{z}^{2}}\right),
$$
where we set
$$\mathcal{a}_{2}(\snr{z}):= \frac{\snr{z}^{p-2}}{(1+\snr{z}^{p})^{1-1/p}}\qquad\mbox{and} \qquad 
\mathcal{a}_{1}(\snr{z}):=\frac{(p-1)\snr{z}^{p-2}}{(1+\snr{z}^{p})^{2-1/p}},
$$
whenever $\snr{z}\not =0$. Define also functions $$\mathcal{r}(\snr{z}):=\frac{\mathcal{a}_{1}(\snr{z})}{\mathcal{a}_{2}(\snr{z})}=\frac{p-1}{1+\snr{z}^{p}},\qquad \quad \mathcal{m}(\snr{z}):=\frac{\snr{z}^{p}}{1+\snr{z}^{p}},$$
and introduce the equivalent renormalized matrix
$$
\tx{A}_{p}(z)\equiv \left(\tx{A}_{p}^{i,j}(z)\right)_{i,j\in \{1,\cdots,n\}}:=\frac{\partial^{2}A_{p}(z)}{\mathcal{a}_{2}(\snr{z})}=\mathds{I}_{n}-\left(1-\mathcal{r}(\snr{z})\right)\left(\frac{z\otimes z}{\snr{z}^{2}}\right).
$$
We prove that \cite[Theorem 1]{lu70} holds true with the choices
\eqn{const}
$$
\begin{array}{c}
\displaystyle
\nu=1,\quad \quad c_{2}=n,\quad \quad M_{1}:=\left(\frac{1}{p-1}+\frac{p^{2}}{2}\right)^{\frac{1}{p}},\quad \quad \mu_{1}=1,\quad \quad c_{8}=c_{9}=c_{10}=c_{11}=0\vspace{1.5mm}\\
\displaystyle
 \nu_{0}:=(p-1)\mathcal{m}(M_{1}),\quad \quad k=2-p,\quad \quad c_{6}:=p\mathcal{m}(M_{1}),\quad \quad c_{7}:=p\sqrt{\mathcal{r}(M_{1})}\mathcal{m}(M_{1}).
\end{array}
$$
Specifically, we verify the validity of \cite[Theorem 1, (1.10), ($\widetilde{1.7}$), ($\widetilde{1.9}$), (1.12), (1.17), (1.18), (1.19), (1.34)]{lu70} - observe that \cite[Theorem 1, ($\widetilde{1.8}$), (1.20), (1.21), (1.29), (1.30)]{lu70} trivially hold as they refer to ingredients (coefficients, forcing terms, etc.) while \eqref{0.91} is autonomous. We bound
$$
\left(\sum_{i,j=1}^{n}(\tx{A}_{p}^{i,j}(z))^{2}\right)^{1/2}\le \left(n-1+\mathcal{r}(\snr{z})^{2}\right)^{1/2}\le n,
$$
so \cite[(1.10)]{lu70} holds. Differentiating further $\tx{A}_{p}$ we obtain
\eqn{0.81}
$$
\partial_{z_{k}}\tx{A}_{p}^{i,j}(z)=\mathcal{r}'(\snr{z})\left(\frac{z_{i}z_{j}z_{k}}{\snr{z}^{3}}\right)+\left(\frac{\mathcal{r}(\snr{z})-1}{\snr{z}}\right)\left(\left(\delta_{i,k}-\frac{z_{i}z_{k}}{\snr{z}^{2}}\right)\frac{z_{j}}{\snr{z}}+\left(\delta_{j,k}-\frac{z_{j}z_{k}}{\snr{z}^{2}}\right)\frac{z_{i}}{\snr{z}}\right),
$$
and
\begin{eqnarray*}
\left(\sum_{i,j,k=1}^{n}\left(\partial_{z_{k}}\tx{A}_{p}^{i,j}(z)\right)^{2}\right)^{1/2}&=&\left(\mathcal{r}'(\snr{z})^{2}+\frac{2(n-1)(\mathcal{r}(\snr{z})-1)^{2}}{\snr{z}^{2}}\right)^{1/2}\nonumber \\
&=&\left(\left(\frac{p\mathcal{m}(\snr{z})\mathcal{r}(\snr{z})}{\snr{z}}\right)^{2}+\frac{2(n-1)(\mathcal{r}(\snr{z})-1)^{2}}{\snr{z}^{2}}\right)^{1/2}\nonumber \\
&\le&\frac{\sqrt{p^{2}+2(n-1)}}{\snr{z}}\le \frac{2\sqrt{n}}{\snr{z}}\le \frac{4\sqrt{c_{2}}}{\snr{z}},
\end{eqnarray*}
and \cite[($\widetilde{1.7}$)]{lu70} is verified. Next, we introduce
\eqn{0.82}
$$\mathcal{E}_{\tx{A}_{p}}(z):=\sum_{i,j=1}^{n}\tx{A}_{p}^{i,j}(z)z_{i}z_{j}=\mathcal{r}(\snr{z})\snr{z}^{2}=(p-1)\mathcal{m}(\snr{z})\snr{z}^{2-p},$$ 
and compute
\eqn{0.80}
$$
\partial_{z}\mathcal{E}_{\tx{A}_{p}}(z)=\frac{(p-1)z}{1+\snr{z}^{p}}\left(2-\frac{p\snr{z}^{p}}{1+\snr{z}^{p}}\right)=z\mathcal{r}(\snr{z})\left(2-p\mathcal{m}(\snr{z})\right).
$$
We then estimate
\begin{eqnarray*}
\left(\sum_{i=1}^{n}\left(\partial_{z_{i}}\mathcal{E}_{\tx{A}_{p}}(z)\right)^{2}\right)^{\frac{1}{2}}&=&\left(\snr{z}^{2}\mathcal{r}(\snr{z})^{2}\left(2-p\mathcal{m}(\snr{z})\right)^{2}\right)^{1/2}\nonumber \\
&=&\sqrt{\mathcal{r}(\snr{z})\mathcal{E}_{\tx{A}_{p}}(z)}\left(2-p\mathcal{m}(\snr{z})\right)\le 2\sqrt{\mathcal{E}_{\tx{A}_{p}}(z)} \le 4\sqrt{c_{2}}\sqrt{\mathcal{E}_{\tx{A}_{p}}(z)},
\end{eqnarray*}
which is \cite[$\widetilde{(1.9)}$]{lu70}. Furthermore, by \eqref{0.80} we have
$$
\frac{\langle z,\partial_{z}\mathcal{E}_{\tx{A}_{p}}(z)\rangle}{\mathcal{E}_{\tx{A}_{p}}(z)}=2-p\mathcal{m}(\snr{z}).
$$
Now, with $M_{1}$ as in \eqref{const} observe that
\eqn{0.93}
$$
\snr{z}\ge M_{1} \ \Longrightarrow \ \begin{cases}
    \displaystyle
    \ \mathcal{E}_{\tx{A}_{p}}(z)\ge (p-1)\mathcal{m}(M_{1})\snr{z}^{2-p}=\nu_{0}\snr{z}^{k}\vspace{1.5mm}\\
    \displaystyle
    \ 2-\frac{\langle z,\partial_{z}\mathcal{E}_{\tx{A}_{p}}(z)\rangle}{\mathcal{E}_{\tx{A}_{p}}(z)}=p\mathcal{m}(\snr{z})\ge p\mathcal{m}(M_{1})=c_{6},
\end{cases}
$$
and \cite[(1.12) and (1.17)]{lu70} hold true. Keep $\eqref{0.93}_{1}$ in mind, as it encodes compensation between ellipticity and nonuniformity and as such it is the key source of (ir)regularity. Moreover, 
\begin{eqnarray*}
\langle\tx{A}_{p}(z)\xi,\xi\rangle&=&\snr{\xi}^{2}\left(1-(1-\mathcal{r}(\snr{z}))\left|\left\langle\frac{z}{\snr{z}},\frac{\xi}{\snr{\xi}}\right\rangle\right|^{2}\right)\nonumber \\
&\ge&\snr{\xi}^{2}\left(1-(1-\mathcal{r}(\snr{z}))\left|\left\langle\frac{z}{\snr{z}},\frac{\xi}{\snr{\xi}}\right\rangle\right|\right)\ge \snr{\xi}^{2}\left(1-\mu_{1}\left|\left\langle\frac{z}{\snr{z}},\frac{\xi}{\snr{\xi}}\right\rangle\right|\right),
\end{eqnarray*}
and \cite[(1.18)]{lu70} follows. We look back at \eqref{0.81} and estimate
\begin{eqnarray*}
\left(\sum_{i,j=1}^{n}\left(\sum_{k=1}^{n} z_{k}\partial_{z_{k}}\tx{A}_{p}^{i,j}(z)\right)^{2}\right)^{1/2}&=&\left(\frac{\mathcal{r}'(\snr{z})^{2}}{\snr{z}^{6}}\left(\sum_{k=1}^{n}z_{k}^{2}\right)^{2}\left(\sum_{i,j=1}^{n}(z_{i}z_{j})^{2}\right)\right)^{1/2}\nonumber \\
&=&\snr{\mathcal{r}'(\snr{z})}\snr{z}=p\mathcal{r}(\snr{z})\mathcal{m}(\snr{z})\nonumber \\
&\stackrel{\eqref{0.82}}{=}&p\left(\sqrt{\mathcal{r}(\snr{z})}\mathcal{m}(\snr{z})\right)\left(\frac{\sqrt{\mathcal{E}_{\tx{A}_{p}}(z)}}{\snr{z}}\right)\le \left(p\sqrt{\mathcal{r}(M_{1})}\mathcal{m}(M_{1})\right)\left(\frac{\sqrt{\mathcal{E}_{\tx{A}_{p}}(z)}}{\snr{z}}\right),
\end{eqnarray*}
which yields \cite[(1.19)]{lu70} - here we used also that $t\mapsto \sqrt{\mathcal{r}(t)}\mathcal{m}(t)$ is decreasing for $t>2^{1/p}$ and that\footnote{If $1<p<3/2$ it is $M_{1}^{p}>(p-1)^{-1}>2$, while if $2>p\ge 3/2$ we have $M_{1}^{p}\ge (p-1)^{-1}+9/8>1+9/8>2$.} $M_{1}\ge 2^{1/p}$. Finally, notice that
\begin{eqnarray*}
c'&:=&c_{6}-\frac{c_{7}^{2}}{2}=1+\frac{(p-1)M_{1}^{p}-1}{1+M_{1}^{p}}-\frac{1}{2}\left(\frac{p^{2}(p-1)M_{1}^{2p}}{(1+M_{1}^{p})^{3}}\right)\nonumber \\
&=&1+\frac{p^{2}(p-1)}{2(1+M_{1}^{p})}\left(1-\frac{M_{1}^{2p}}{(1+M_{1}^{p})^{2}}\right)\ge 1,
\end{eqnarray*}
and \cite[(1.34)]{lu70} is satisfied. Consequently, \cite[Theorem 1]{lu70} guarantees a priori interior gradient estimates for solutions to \eqref{0.91} for $1<p<2$. Next, if $p>2$ a direct calculation highlights that all assumptions of \cite[Theorem 1]{lu70} check out, possibly with larger constants (in particular $M_{1}^{p}>2$ and $\snr{z}\ge M_{1} \ \Longrightarrow \ \mathcal{r}(\snr{z})\le 1$ for all $1<p<\infty$), regardless the size of $p$, except one, that is \cite[(1.12)]{lu70} - our $\eqref{0.93}_{1}$ - as now $k=2-p<0$. Recall that for Lipschitz regularity the only relevant ellipticity information is the one available at infinity, so we can assume $\snr{z}\ge M_{1}$ and record that in this case it is $\mathcal{r}(\snr{z})\approx_{p}\snr{z}^{-p}$. By \eqref{0.82} we then have
\eqn{0.92}
$$
\mathcal{E}_{\tx{A}_{p}}(z)=\mathcal{r}(\snr{z})\snr{z}^{2}\approx_{p}\left(\mathcal{r}(\snr{z})^{-1}\right)^{\frac{2}{p}-1}\stackrel{p>2}{\to}_{\snr{z}\to \infty}0,
$$
so that ellipticity decreases when the nonuniformity rate of \eqref{0.91} increases, a clear obstruction to regularity.

\subsubsection{BV-Relaxation} Functional $\mathcal{A}_{p}$ in \eqref{0.1} features linear growth, that is
\eqn{0.96}
$$
\snr{z}\le A_{p}(z)\le 1+\snr{z}\qquad \mbox{for all} \ \ z\in \mathbb{R}^{n},
$$
so minimizing sequences\footnote{Subject to \eqref{0.94}, a minimizing sequence for $\mathcal{A}_{p}$ in $\mathcal{D}_{0}(v_{0};\Omega)$ is a sequence $\{v_{i}\}_{i\in \mathbb{N}}\subset \mathcal{D}_{0}(v_{0};\Omega)$ such that $\mathcal{A}_{p}(v_{i};\Omega)\to \inf_{w\in \mathcal{D}_{0}(v_{0};\Omega)}\mathcal{A}_{p}(w;\Omega)$, cf. \cite[Definition 2.2]{bs15}.} in Dirichlet class $\mathcal{D}_{0}(v_{0};\Omega):=v_{0}+W^{1,1}_{0}(\Omega)$ for some $v_{0}\in W^{1,1}(\Omega)$, are bounded in $W^{1,1}(\Omega)$, but not necessarily weakly precompact there. This means that minima might not exist in $\mathcal{D}_{0}(v_{0};\Omega)$, and it is therefore necessary to extend $\mathcal{A}_{p}$ by semicontinuity in a larger space, $BV(\Omega)$, see \cite{gms79a,bs13,bs15,gk24}. Since 
\eqn{0.94}
$$
A_{p}\mbox{ is strictly convex and } \lim_{\snr{z}\to \infty}\frac{A_{p}(z)}{\snr{z}}=1,
$$
following \cite[Section 2]{bs15}, the BV-extension by semicontinuity of $\mathcal{A}_{p}$ in class $\mathcal{D}_{0}(v_{0};\Omega)$, that is, for any\footnote{When working with $BV$-maps $w$, it is customary to adopt the decomposition of the gradient measure $Dw=\nabla w \mathcal{L}^{n}+D^{s}w$ into its absolutely continuous and its singular part with respect to the $n$-dimensional Lebesgue measure $\mathcal{L}^{n}$. Needless to say, for $W^{1,1}$-functions $w$ it is $Dw=\nabla w \mathcal{L}^{n}$ and $D^{s}w=0$, and we shall always refer to the gradient of a $W^{1,1}$-function with the symbol $Dw$.} $w\in BV(\Omega)$,
$$
\bar{\mathcal{A}}_{p}(w;\mathcal{D}_{0}(v_{0};\Omega)):=\inf\left\{\liminf_{i\to \infty}\mathcal{A}_{p}(w_{i};\Omega)\colon \{w_{i}\}_{i\in \mathbb{N}}\subset \mathcal{D}_{0}(v_{0};\Omega)\mbox{ and }w_{i}\to w\mbox{ in }L^{1}(\Omega)\right\}
$$
admits integral representation
\eqn{0.95}
$$
\bar{\mathcal{A}}_{p}(w;\mathcal{D}_{0}(v_{0};\Omega))=\int_{\Omega}A_{p}(\nabla w)\dx+\snr{D^{s}w}(\Omega)+\int_{\partial\Omega}\snr{w-v_{0}}\d\mathcal{H}^{n-1}(x),
$$
where functions in the last integral of \eqref{0.95} are intended as traces, and we also used that the recession function associated to $A_{p}$ is
$$
A_{p}^{\infty}(z):=\lim_{s\to \infty}\frac{A_{p}(sz)}{s}=\snr{z}\qquad \mbox{for all} \ \ z\in \mathbb{R}^{n}.
$$
Next, we recall that a generalized minimizer of $\mathcal{A}_{p}$ in class $\mathcal{D}_{0}(v_{0};\Omega)$ is a function $\bar{v}\in BV(\Omega)$ such that $\bar{\mathcal{A}}_{p}(\bar{v};\mathcal{D}_{0}(v_{0};\Omega))\le \bar{\mathcal{A}}_{p}(w;\mathcal{D}_{0}(v_{0};\Omega))$ for all $w\in BV(\Omega)$, cf. \cite[Definition 2.1]{bs15}. The existence of generalized minimizers is by-now classical, cf. \cite[Section 2]{gms79a}.
\begin{proposition}
Assume \eqref{0.96}-\eqref{0.94}, and let $v_{0}\in W^{1,1}(\Omega)$ be a function. Then there exists a generalized minimizer of $\mathcal{A}_{p}$ in class $\mathcal{D}_{0}(v_{0};\Omega)$.
\end{proposition}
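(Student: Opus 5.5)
The plan is the direct method in $BV(\Omega)$, applied to the relaxed functional $\bar{\mathcal{A}}_{p}(\cdot;\mathcal{D}_{0}(v_{0};\Omega))$ through its integral representation \eqref{0.95}. We treat the boundary penalty by extension to a larger domain. Fix a bounded Lipschitz domain $\Omega'\Supset\Omega$ and extend $v_{0}$ to a function $\tilde v_{0}\in W^{1,1}(\Omega')$; this is possible since $\Omega$ is Lipschitz. For $w\in BV(\Omega)$ set $\tilde w:=w$ in $\Omega$ and $\tilde w:=\tilde v_{0}$ in $\Omega'\setminus\bar\Omega$. Then $\tilde w\in BV(\Omega')$, and the jump part of $D\tilde w$ on $\partial\Omega$ has total variation $\int_{\partial\Omega}\snr{w-v_{0}}\d\mathcal{H}^{n-1}$. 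Since $A_{p}^{\infty}(z)=\snr{z}$, formula \eqref{0.95} becomes
$$
\bar{\mathcal{A}}_{p}(w;\mathcal{D}_{0}(v_{0};\Omega))=\int_{\bar\Omega}A_{p}(D\tilde w)-\int_{\Omega'\setminus\bar\Omega}A_{p}(D\tilde v_{0})\dx ,
$$
where the first term is the convex functional of a measure, $\int A_{p}(\nabla\tilde w)\dx+\snr{D^{s}\tilde w}$, restricted to the compact set $\bar\Omega$. The subtracted term is a finite constant, because $v_{0}\in W^{1,1}$ and \eqref{0.96} holds.

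The first step is coercivity. By \eqref{0.96} and the bound $\int_{\Omega}A_{p}(v_{0})\le \snr{\Omega}+\nr{Dv_{0}}_{L^{1}}$, the infimum $m$ of $\bar{\mathcal{A}}_{p}$ over $BV(\Omega)$ is finite. Moreover, a minimizing sequence $\{w_{i}\}$ satisfies
$$\snr{Dw_{i}}(\Omega)+\nr{w_{i}-v_{0}}_{L^{1}(\partial\Omega)}\le m+1.$$
Combining this with the Poincaré-type inequality
$$\nr{w}_{L^{1}(\Omega)}\lesssim \snr{Dw}(\Omega)+\nr{w}_{L^{1}(\partial\Omega)}$$
on Lipschitz domains, valid for $BV$ with traces, gives a uniform $BV(\Omega)$ bound. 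Equivalently, $\{\tilde w_{i}\}$ is bounded in $BV(\Omega')$.

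The second step is compactness. By the compact embedding $BV(\Omega')\hookrightarrow L^{1}(\Omega')$, a subsequence satisfies $\tilde w_{i}\to\tilde w$ in $L^{1}(\Omega')$ and $D\tilde w_{i}\stackrel{*}{\rightharpoonup}D\tilde w$ weakly-$*$ as measures. The limit still equals $\tilde v_{0}$ outside $\bar\Omega$, so $\tilde w$ is the extension of some $\bar v:=\tilde w|_{\Omega}\in BV(\Omega)$.

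The third step, which is the main obstacle, is lower semicontinuity. We need
$$\int_{\bar\Omega}A_{p}(D\tilde w)\le\liminf_{i}\int_{\bar\Omega}A_{p}(D\tilde w_{i}),$$
and the difficulty is that weak-$*$ convergence on an open set does not directly control mass concentrating on the closed set $\bar\Omega$. I would apply the Reshetnyak/Goffman–Serrin lower semicontinuity theorem for convex linear-growth integrands, using \eqref{0.94} and the representation via the measure $(\mathcal{L}^{n},D\tilde w)$, on the whole $\Omega'$. This is legitimate because the contribution on $\Omega'\setminus\bar\Omega$ is the same fixed quantity for every element of the sequence. Subtracting it leaves precisely the inequality above.

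Alternatively, one can skip the extension entirely: $\bar{\mathcal{A}}_{p}$ is by definition an $L^{1}$-lower semicontinuous envelope, hence $L^{1}$-lsc on $BV(\Omega)$. The direct method then applies once the coercivity and compactness above are in place, with \eqref{0.95} needed only to verify coercivity. Either way, $\bar v$ is a generalized minimizer. Strict convexity in \eqref{0.94} is not needed for existence, only later for uniqueness of $\nabla\bar v$.
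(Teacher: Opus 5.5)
Your argument is correct and is essentially the classical one the paper relies on. The paper gives no proof of its own and simply cites Giaquinta--Modica--Sou\v{c}ek \cite[Section 2]{gms79a}, whose existence argument is exactly this: the direct method in $BV$, with extension by the datum to a larger domain $\Omega'\Supset\Omega$ and lower semicontinuity of the convex linear-growth functional there.

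One small slip: in your displayed identity the first integral should be taken over $\Omega'$, as you in fact do in the third step, because $\int_{\bar\Omega}A_{p}(D\tilde w)$ already equals $\bar{\mathcal{A}}_{p}(w;\mathcal{D}_{0}(v_{0};\Omega))$, so subtracting the exterior constant from it is wrong. Also, $\int_{\Omega}A_{p}(v_{0})$ should read $\int_{\Omega}A_{p}(Dv_{0})\dx$.
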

\noindent We conclude this section with an auxiliary result, probably well-known.
\begin{proposition}\label{rel.p}
Let $v_{0}\in W^{1,1}(\Omega)$ be a function and assume that there exists a solution $v\in \mathcal{D}_{0}(v_{0};\Omega)$ to Dirichlet problem \eqref{pd}. Then $v$ is the unique solution to \eqref{pd}, the unique generalized minimizer of $\mathcal{A}_{p}$ in class $\mathcal{D}_{0}(v_{0};\Omega)$, and $\bar{\mathcal{A}}_{p}(v;\mathcal{D}_{0}(v_{0};\Omega))=\mathcal{A}_{p}(v;\Omega)$.
\end{proposition}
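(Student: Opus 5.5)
The plan has three steps: show that the relaxed functional agrees with $\mathcal{A}_{p}$ on the Dirichlet class and is never below it in the infimum, deduce that $v$ is a generalized minimizer, and then use strict convexity of $A_{p}$ together with the strict convexity of the Euclidean norm in the boundary penalty to get uniqueness.

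\textbf{Step 1: identity on the Dirichlet class.} For $w\in\mathcal{D}_{0}(v_{0};\Omega)$ the gradient is absolutely continuous, so $D^{s}w=0$. Its trace coincides with that of $v_{0}$, so the boundary integral vanishes. Hence \eqref{0.95} gives
\[
\bar{\mathcal{A}}_{p}(w;\mathcal{D}_{0}(v_{0};\Omega))=\mathcal{A}_{p}(w;\Omega).
\]
This yields the last claim once $w=v$.

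\textbf{Step 2: $v$ is a generalized minimizer.} By definition of the relaxation as an $L^{1}$-lower semicontinuous envelope along sequences in $\mathcal{D}_{0}(v_{0};\Omega)$, every $w\in BV(\Omega)$ satisfies
\[
\bar{\mathcal{A}}_{p}(w;\mathcal{D}_{0}(v_{0};\Omega))\ge \inf_{\mathcal{D}_{0}(v_{0};\Omega)}\mathcal{A}_{p}=\mathcal{A}_{p}(v;\Omega).
\]
Indeed, each approximating $\liminf$ is at least the infimum, and the infimum over an empty family is $+\infty$. Combined with Step 1, this shows that $v$ is a generalized minimizer.

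\textbf{Step 3: uniqueness among generalized minimizers.} Let $\bar v\in BV(\Omega)$ be another generalized minimizer. Then
\[
\bar{\mathcal{A}}_{p}(\bar v)=\bar{\mathcal{A}}_{p}(v)=m:=\mathcal{A}_{p}(v;\Omega).
\]
I will test $(v+\bar v)/2$ in \eqref{0.95}, whose three terms behave as follows.
\begin{itemize}
\item Since $\nabla$ and $D^{s}$ are linear, the absolutely continuous part is $(\nabla v+\nabla\bar v)/2$.
\item The singular part is $D^{s}\bar v/2$, because $D^{s}v=0$.
\item The boundary trace difference is $(\bar v-v_{0})/2$.
\end{itemize}
The total-variation and boundary terms are convex, in fact positively $1$-homogeneous. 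Hence
\[
\bar{\mathcal{A}}_{p}\big(\tfrac{v+\bar v}{2}\big)\le \tfrac12\bar{\mathcal{A}}_{p}(v)+\tfrac12\bar{\mathcal{A}}_{p}(\bar v)=m,
\]
with strict inequality unless $\nabla v=\nabla\bar v$ a.e., by the strict convexity of $A_{p}$ in \eqref{0.94}. Minimality forces equality, so $\nabla\bar v=\nabla v$ a.e.

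\textbf{Main obstacle: the singular part and boundary defect.} Once $\nabla\bar v=\nabla v$, the two energy identities give
\[
\snr{D^{s}\bar v}(\Omega)+\int_{\partial\Omega}\snr{\bar v-v_{0}}\d\mathcal{H}^{n-1}=0.
\]
Then $\bar v\in W^{1,1}(\Omega)$ with trace equal to that of $v_{0}$. So $\bar v-v\in W^{1,1}_{0}(\Omega)$ has zero gradient, and a function in $W^{1,1}_{0}(\Omega)$ with vanishing gradient is zero, giving $\bar v=v$.

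This step is the delicate one, because strict convexity controls only the absolutely continuous part, while the singular and boundary parts are merely convex. The saving point is that both $\bar{\mathcal{A}}_{p}(v)$ and $\bar{\mathcal{A}}_{p}(\bar v)$ equal $m$, so once the $\nabla$-parts coincide the remaining terms must vanish.

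Finally, any other solution of \eqref{pd} in $\mathcal{D}_{0}(v_{0};\Omega)$ is a generalized minimizer by Steps 1 and 2, hence equals $v$ by Step 3; this gives uniqueness of the solution to \eqref{pd}.
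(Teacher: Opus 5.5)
Your proof is correct, and it takes a more self-contained route than the paper. The paper gets $\inf_{\mathcal{D}_{0}(v_{0};\Omega)}\mathcal{A}_{p}=\inf_{BV(\Omega)}\bar{\mathcal{A}}_{p}(\,\cdot\,;\mathcal{D}_{0}(v_{0};\Omega))$ by applying \cite[Theorem 2.3]{bs15} to the constant minimizing sequence $v_{i}\equiv v$. It then cites \cite[Proposition B.1]{bs13} for uniqueness of the generalized minimizer.

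You make two replacements. First, once a minimizer in $\mathcal{D}_{0}(v_{0};\Omega)$ exists, only the elementary inequality $\bar{\mathcal{A}}_{p}(w;\mathcal{D}_{0}(v_{0};\Omega))\ge\inf_{\mathcal{D}_{0}(v_{0};\Omega)}\mathcal{A}_{p}$ is needed, and it is immediate from the definition of the relaxation. Combined with the constant sequence, it even gives $\bar{\mathcal{A}}_{p}(v;\mathcal{D}_{0}(v_{0};\Omega))=\mathcal{A}_{p}(v;\Omega)$ without appealing to \eqref{0.95}. Second, you prove uniqueness directly by a midpoint argument on the representation \eqref{0.95}. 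Strict convexity of $A_{p}$ gives $\nabla\bar v=\nabla v$ a.e. Since $v$ has neither a singular part nor a boundary defect, the energy balance then forces $\snr{D^{s}\bar v}(\Omega)$ and the trace penalty of $\bar v$ to vanish, so $\bar v-v\in W^{1,1}_{0}(\Omega)$ has zero gradient and is therefore zero.

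The paper's version is shorter and rests on results valid more generally. For example, the cited statements give equality of the infima and convergence of minimizing sequences to generalized minimizers even when no Sobolev minimizer exists. Your version depends only on \eqref{0.95} and the trace characterization of $W^{1,1}_{0}(\Omega)$ on Lipschitz domains. It also shows explicitly that strict convexity is needed only for the absolutely continuous part, while the merely convex singular and boundary terms are eliminated by comparing energies.
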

\begin{proof}
First, notice that $\eqref{0.94}_{1}$ guarantees that $v\in \mathcal{D}_{0}(v_{0};\Omega)$ is the unique solution to \eqref{pd} in $\mathcal{D}_{0}(v_{0};\Omega)$. Moreover, via \eqref{0.95} we have that $\mathcal{A}_{p}(w;\Omega)=\bar{\mathcal{A}}_{p}(w;\mathcal{D}_{0}(v_{0};\Omega))$ for all $w\in \mathcal{D}_{0}(v_{0};\Omega)$, thus $\mathcal{A}_{p}(v;\Omega)=\bar{\mathcal{A}}_{p}(v;\mathcal{D}_{0}(v_{0};\Omega))$. By \eqref{0.94}, \cite[Theorem 2.3]{bs15} applies with constant minimizing sequence $\{v_{i}\}_{i\in \mathbb{N}}\subset \mathcal{D}_{0}(v_{0};\Omega)$, $v_{i}=v$ for all $i\in \mathbb{N}$, so that 
$$
\mathcal{A}_{p}(v;\Omega)=\inf_{w\in \mathcal{D}_{0}(v_{0};\Omega)}\mathcal{A}_{p}(w;\Omega)=\inf_{w\in BV(\Omega)}\bar{\mathcal{A}}_{p}(w;\mathcal{D}_{0}(v_{0};\Omega))=\bar{\mathcal{A}}_{p}(v;\mathcal{D}_{0}(v_{0};\Omega)),
$$
and \cite[Proposition B.1]{bs13} allows concluding that $v$ is also the unique generalized minimizer of $\mathcal{A}_{p}$.
\end{proof}
\subsection{Degenerate/singular Born-Infeld functionals}\label{dbi.s} Here we give a quick overview of the main properties of Born-Infeld functionals with emphasis on the existence of maxima, their "singular" set, and to the application of convex duality arguments. For $1<q<\infty$, introduce functional
\eqn{0.0}
$$
\mathbb{X}(\Omega)\ni w\mapsto\mathcal{J}_{q}(w;\Omega):=\int_{\Omega}(1-\snr{Dw}^{q})^{\frac{1}{q}}\dx,
$$
governed by integrand $\bar{B}_{1}(0)\ni z\mapsto J_{q}(z):=(1-\snr{z}^{q})^{\frac{1}{q}}$. For definiteness, we agree that $J_{q}$ is degenerate if $2<q<\infty$ or singular when $1<q<2$. Existence and uniqueness for solutions to the Dirichlet problem for functional $\mathcal{J}_{q}$ are standard after Bartnik \& Simon \cite{bs82}.
\begin{lemma}
Let $\Omega\subset \mathbb{R}^{n}$ be a bounded, Lipschitz domain and $u_{0}\in C(\partial \Omega)$ be a function. Introduce class
\eqn{0.33}
$$
\mathcal{D}_{\infty}(u_{0};\Omega):=\left\{w\in \mathbb{X}(\Omega)\colon w=u_{0}\mbox{ on }\partial \Omega\right\}.
$$
Then there exists a unique maximizer $u\in \mathcal{D}_{\infty}(u_{0};\Omega)$ of Dirichlet problem
$$
\mathcal{D}_{\infty}(u_{0};\Omega)\ni w\mapsto \max_{w\in \mathcal{D}_{\infty}(u_{0};\Omega)}\mathcal{J}_{q}(w;\Omega).
$$
iff $\mathcal{D}_{\infty}(u_{0};\Omega)\not =\emptyset$. 
\end{lemma}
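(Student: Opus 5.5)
The plan is to prove the two implications separately. Necessity is immediate: any maximizer lies in $\mathcal{D}_{\infty}(u_{0};\Omega)$, so that class cannot be empty. For sufficiency, assume $\mathcal{D}_{\infty}(u_{0};\Omega)\neq\emptyset$ and use the direct method in the uniform topology.

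First, $\mathcal{J}_{q}$ is bounded on $\mathcal{D}_{\infty}(u_{0};\Omega)$. Indeed, $0\le J_{q}\le 1$ on $\bar B_{1}(0)$, so $0\le\mathcal{J}_{q}(w;\Omega)\le\snr{\Omega}$, and the supremum $S$ is finite.

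Take a maximizing sequence $\{w_{i}\}_{i\in \mathbb{N}}\subset\mathcal{D}_{\infty}(u_{0};\Omega)$. These functions have gradients bounded by $1$ and share the boundary values $u_{0}$, so they are equi-Lipschitz along segments inside $\Omega$. Since $\Omega$ is a bounded Lipschitz domain, it is quasiconvex, hence each $w_{i}$ is $c(\Omega)$-Lipschitz on $\bar\Omega$ and they are uniformly bounded. By Arzel\`a--Ascoli, a subsequence converges uniformly on $\bar\Omega$ to some $u$ with $Dw_{i}\rightharpoonup^{*}Du$ in $L^{\infty}$. Since $\{\snr{z}\le1\}$ is convex and closed, we get $\nr{Du}_{L^{\infty}}\le1$, and uniform convergence preserves $u=u_{0}$ on $\partial\Omega$. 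Hence $u\in\mathcal{D}_{\infty}(u_{0};\Omega)$.

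The key point is upper semicontinuity. The map $J_{q}$ is concave on $\bar B_{1}(0)$, because $z\mapsto(1-\snr{z}^{q})^{1/q}$ is the $\ell^{q}$-type norm of $(1,\cdot)$ read in reverse; equivalently, its hypograph is a convex set. I expect this concavity to be the step needing the most care, and I would check it through the reverse Minkowski inequality
\[
\left((a+b)^{q}-\snr{z_{1}+z_{2}}^{q}\right)^{1/q}\ge (a^{q}-\snr{z_{1}}^{q})^{1/q}+(b^{q}-\snr{z_{2}}^{q})^{1/q},
\]
which gives superadditivity and $1$-homogeneity of $(t,z)\mapsto(t^{q}-\snr{z}^{q})^{1/q}$ on the cone $\{\snr{z}\le t\}$. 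Concavity on $\bar B_{1}(0)$ then follows by restricting to $t=1$. Since $J_{q}$ is moreover continuous and bounded there, the functional $w\mapsto\int_{\Omega}J_{q}(Dw)\dx$ is weak$^{*}$ upper semicontinuous on $\mathbb{X}(\Omega)$ (Mazur's lemma combined with Fatou, using $J_{q}\le 1$). This yields $\mathcal{J}_{q}(u;\Omega)\ge\limsup_{i\to\infty}\mathcal{J}_{q}(w_{i};\Omega)=S$, so $u$ is a maximizer.

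For uniqueness, let $u_{1},u_{2}$ both be maximizers. Their midpoint is admissible, and concavity gives equality in the concavity inequality a.e. The delicate issue is that $J_{q}$ is not strictly concave along directions on the cone boundary, in particular along segments lying in $\{\snr{z}=1\}$, for instance tangentially on light rays. I would first try the strict concavity of $J_{q}$ in the interior of $B_{1}(0)$, where $\partial^{2}J_{q}<0$ for $\snr{z}<1$. It settles the points where the segment $[Du_{1},Du_{2}]$ meets the open ball. On the remaining set, $\snr{Du_{1}}=\snr{Du_{2}}=\snr{(Du_{1}+Du_{2})/2}=1$, and strict convexity of the Euclidean ball forces $Du_{1}=Du_{2}$. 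Hence $Du_{1}=Du_{2}$ a.e., and equal boundary values give $u_{1}=u_{2}$. Alternatively, one may simply invoke Bartnik \& Simon \cite{bs82}, whose argument adapts verbatim to $q\neq2$.
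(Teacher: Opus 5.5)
Your proof is correct and follows the same route as the paper. The paper invokes the Arzel\`a--Ascoli compactness and upper-semicontinuity argument of Bartnik \& Simon \cite[Proposition 1.1]{bs82} for existence and strict concavity of $J_{q}$ for uniqueness; you have written out those details, plus the trivial necessity direction. The only overcaution is in the uniqueness step: the relative interior of any segment joining distinct points of $\bar{B}_{1}(0)$ lies in $B_{1}(0)$, so $J_{q}$ is strictly concave on all of $\bar{B}_{1}(0)$, as the paper uses, and your case split on $\{\snr{z}=1\}$ is correct but unnecessary.
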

\begin{proof}
Since $J_{q}$ is continuous and strictly concave on
$\bar{B}_{1}(0)$, and
$\mathcal D_\infty(u_0;\Omega)$ is uniformly bounded and
equicontinuous - recall that, being $\Omega$ Lipschitz-regular, the elements of $\mathbb{X}(\Omega)$ are continuous in $\bar{\Omega}$ - the compactness and upper-semicontinuity argument
of \cite[Proposition 1.1]{bs82} applies verbatim and yields existence. Uniqueness follows from the strict concavity of $J_{q}$. 
\end{proof}
\noindent Observe that $J_{q}\in C^{\infty}(B_{1}(0)\setminus \{0\})\cap C^{0,\frac{1}{q}}(\bar{B}_{1}(0))$, so the natural "singular set" for maxima of $\mathcal{J}_{q}$ is the so-called set of light rays \cite{bs82,bar87}, that is, the union of those maximally extended lines verifying 
\eqn{def.light}
$$
\overline{xy}\subset \bar{\Omega}\mbox{ such that }x\not =y\in \bar{\Omega},\mbox{ }(\overline{xy}\setminus \{x,y\})\subset \Omega,\mbox{ and }\snr{u(x)-u(y)}=\snr{x-y}
$$
and
\eqn{def.light.1}
$$
\begin{array}{c}
\displaystyle
\mbox{for every }z \in\bar{\Omega}\setminus \overline{xy}\mbox{ collinear with }x,y\mbox{ such that the smallest segment containing }\overline{xy}\cup\{z\}\vspace{1.5mm}\\
\displaystyle
\mbox{ has relative interior contained in }\Omega,\mbox{ one has } \snr{u(x)-u(z)}<\snr{x-z}\mbox{ or }\snr{u(y)-u(z)}<\snr{y-z}.
\end{array}
$$
In other terms, we can define the set of light rays as $\Sigma:=\left\{\overline{xy}\subset \bar{\Omega}\colon \mbox{\eqref{def.light}-\eqref{def.light.1} hold}\right\}.$ We adopt the convention that if $x\not =y$ are the endpoints of a maximally extended light segment $\overline{xy}$, we label them so that $u(x)>u(y)$ and we call $x$ the left endpoint and $y$ the right endpoint. Notice that this position is valid for all functions $u\in \mathbb{X}(\Omega)$. It is in fact worth recalling that $u$ is differentiable at all interior points of the segments in $\Sigma$. Specifically, if $z\in \Omega\cap (\overline{xy}\setminus\{x,y\})$, by basic rigidity arguments it is $Du(z)=(x-y)/\snr{x-y}$, thus $\snr{Du(z)}=1$, see \cite[Lemma 3.5]{wan08}. Next, let us highlight that functional $\mathcal{J}_{q}$ is naturally related via convex duality \cite[Chapters IV-V]{et99} to integral $\mathcal{A}_{p}$. Set $\mathcal{j}_{q}(\snr{z}):=J_{q}(z)$, notice that $z\mapsto J_{q}(z)$ is concave, extend
\eqn{0.34}
$$
\ti{J}_{q}(z):=\begin{cases}
    \displaystyle
    \ -\jj_{q}(\snr{z})\quad &\mbox{if} \ \ \snr{z}\le 1\vspace{1.5mm}\\
    \displaystyle
    \ +\infty\quad &\mbox{if} \ \ \snr{z}>1,
\end{cases}
$$
and name
\eqn{0.35}
$$
W^{1,1}(\Omega)\ni w\mapsto \ti{\mathcal{J}}_{q}(w;\Omega):=\int_{\Omega}\ti{J}_{q}(Dw)\dx.
$$
As a consequence of the fact that $\mathcal{J}_{q}(w;\Omega)=-\ti{\mathcal{J}}_{q}(w;\Omega)$ for all $w\in \mathcal{D}_{\infty}(u_{0};\Omega)$ we have
\eqn{0.2}
$$
\max_{w\in \mathcal{D}_{\infty}(u_{0};\Omega)}\mathcal{J}_{q}(w;\Omega)=-\min_{w\in \mathcal{D}_{\infty}(u_{0};\Omega)}\ti{\mathcal{J}}_{q}(w;\Omega).
$$
A direct computation and the regularity of $J_{q}$ on $B_{1}(0)\setminus \{0\}$ show that
$$
\frac{\min\{q-1,1\}\snr{z}^{q-2}\snr{\xi}^{2}}{(1-\snr{z}^{q})^{1-1/q}}\le \langle \partial^{2}\ti{J}_{q}(z)\xi,\xi\rangle\qquad \mbox{and}\qquad \snr{\partial^{2}\ti{J}_{q}(z)}\le \frac{\sqrt{n-1+(q-1)^{2}}\snr{z}^{q-2}}{(1-\snr{z}^{q})^{2-1/q}},
$$
for all $z\in B_{1}(0)\setminus\{0\}$, $\xi\in \mathbb{R}^{n}$. Moreover, $\snr{z}\mapsto -\jj_{q}(\snr{z})$ is convex and continuous on $\bar{B}_{1}(0)$, the lower semicontinuous extension by $+\infty$ outside $\bar{B}_{1}(0)$ is a proper, closed, convex function. For the sake of the reader, let us explicitly compute the Fenchel conjugate of $\ti{J}_{q}$.
\begin{lemma}
The Fenchel conjugate of $\ti{J}_{q}$ is $A_{p}(z):=\ti{J}_{q}^{*}(z)=(1+\snr{z}^{p})^{\frac{1}{p}}$ for all $z\in \mathbb{R}^{n}$, with $p:=q/(q-1)$. Moreover, for all $z\in \mathbb{R}^{n}\setminus \{0\}$, $\xi\in B_{1}(0)\setminus\{0\}$ it holds
\eqn{0.98}
$$
\partial A_{p}(z)=\frac{\snr{z}^{p-2}z}{(1+\snr{z}^{p})^{1-1/p}},\qquad \quad \partial \ti{J}_{q}(\xi)=\frac{\snr{\xi}^{q-2}\xi}{(1-\snr{\xi}^{q})^{1-1/q}},
$$
and, in particular, 
\eqn{0.99}
$$
\partial A_{p}(\partial\ti{J}_{q}(\xi))=\xi\qquad \mbox{for all} \ \ \xi\in B_{1}(0)\setminus \{0\}.
$$
\end{lemma}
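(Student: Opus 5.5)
Since $\ti{J}_{q}$ is radial, I would reduce everything to one variable. Write $\ti{J}_{q}(\xi)=g(\snr{\xi})$ with $g(s):=-(1-s^{q})^{1/q}$ for $s\in[0,1]$ and $g(s)=+\infty$ for $s>1$. Then the conjugate is radial too:
$$
\ti{J}_{q}^{*}(z)=\sup_{\snr{\xi}\le 1}\big(\langle z,\xi\rangle+(1-\snr{\xi}^{q})^{1/q}\big)=\sup_{0\le s\le 1}\big(s\snr{z}+(1-s^{q})^{1/q}\big).
$$
The second equality holds because, for fixed $\snr{\xi}=s$, the inner product is maximized by $\xi=s z/\snr{z}$ (any $\xi$ with $\snr{\xi}=s$ if $z=0$). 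The domain restriction $\snr{\xi}\le 1$ is exactly the effect of the $+\infty$ extension.

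Next I would compute this supremum. The first route is Hölder's inequality in $\mathbb{R}^{2}$ with conjugate exponents $p,q$:
$$
s\snr{z}+(1-s^{q})^{1/q}\cdot 1\le (\snr{z}^{p}+1)^{1/p}(s^{q}+1-s^{q})^{1/q}=(1+\snr{z}^{p})^{1/p}.
$$
Equality holds when $(s^{q},1-s^{q})$ is proportional to $(\snr{z}^{p},1)$, that is at $s^{q}=\snr{z}^{p}/(1+\snr{z}^{p})\in[0,1)$. This point is admissible, so the supremum is attained and $\ti{J}_{q}^{*}=A_{p}$. Alternatively, I would set the derivative to zero, $\snr{z}=s^{q-1}(1-s^{q})^{1/q-1}$, and use $(p-1)(q-1)=1$ to solve for $s$. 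Because the one-dimensional function is concave, that critical point is the maximum. Hölder is cleaner, and it also gives the maximizer explicitly, which I need for the last part.

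The gradient formulas in \eqref{0.98} then follow by direct differentiation of radial functions, $\partial h(\snr{z})=h'(\snr{z})z/\snr{z}$. For $A_{p}$ this gives $(1+\snr{z}^{p})^{1/p-1}\snr{z}^{p-1}\cdot z/\snr{z}$, consistent with the formula already recorded in Section \ref{luc.s}. For $\ti{J}_{q}$ on $0<\snr{\xi}<1$ we have $g'(s)=s^{q-1}(1-s^{q})^{1/q-1}$, which yields the stated expression.

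For \eqref{0.99}, I would set $z:=\partial\ti{J}_{q}(\xi)$ and note that $z$ is a positive multiple of $\xi$ with
$$
\snr{z}=\snr{\xi}^{q-1}(1-\snr{\xi}^{q})^{-(q-1)/q}.
$$
Raising to the power $p$ and using $p(q-1)=q$ gives $\snr{z}^{p}=\snr{\xi}^{q}/(1-\snr{\xi}^{q})$. Hence $1+\snr{z}^{p}=(1-\snr{\xi}^{q})^{-1}$ and
$$
\snr{\partial A_{p}(z)}=\Big(\frac{\snr{z}^{p}}{1+\snr{z}^{p}}\Big)^{(p-1)/p}=(\snr{\xi}^{q})^{1/q}=\snr{\xi}.
$$
Since $\partial A_{p}(z)$ points in the direction of $z$, which is the direction of $\xi$, we get $\partial A_{p}(\partial\ti{J}_{q}(\xi))=\xi$. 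This is consistent with the Hölder equality case: the maximizer $s$ found there equals $\snr{\partial A_{p}(z)}$, as Fenchel–Young predicts, since on the open ball where $\ti{J}_{q}$ is differentiable $\partial\ti{J}_{q}^{*}$ and $\partial\ti{J}_{q}$ are inverse maps.

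The only real obstacle is the exponent bookkeeping with $1/p+1/q=1$, together with checking that the optimizer lies in the interior of $[0,1]$. For $\xi\in B_{1}(0)\setminus\{0\}$, the map $\partial\ti{J}_{q}$ is defined and nonzero, so no boundary case $\snr{\xi}=1$ arises in \eqref{0.99}.
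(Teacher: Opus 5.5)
Your proposal is correct and takes essentially the paper's route: reduce to the radial variable, apply H\"older with conjugate exponents $(p,q)$ to $(\snr{z},1)\cdot(s,(1-s^{q})^{1/q})$, and read off the maximizer $s^{q}=\snr{z}^{p}/(1+\snr{z}^{p})$ from the equality case. Your explicit check of \eqref{0.99} using $p(q-1)=q$ and $(p-1)/p=1/q$ fills in the step the paper only describes as following directly from the definitions.
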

\begin{proof}
By definition and H\"older inequality with conjugate exponents $(p,q)$, for $z\in \mathbb{R}^{n}$ it is 
\begin{eqnarray*}
    \ti{J}_{q}^{*}(z)&:=&\sup_{\snr{\xi}\le 1}\{\langle z,\xi\rangle-\ti{J}_{q}(\xi)\}=\sup_{\snr{\xi}\le 1}\{\snr{\xi}\snr{z}+\jj_{q}(\snr{\xi})\}\nonumber \\
    &=&\sup_{\snr{\xi}\le 1}\{\langle(\snr{z},1),(\snr{\xi},\jj_{q}(\snr{\xi}))\rangle\}\le (1+\snr{z}^{p})^{\frac{1}{p}},
\end{eqnarray*}
where we used that $(\snr{\xi}^{q}+\jj_{q}(\snr{\xi})^{q})^{\frac{1}{q}}=1$. Equality above is attained at $\snr{\xi}=\snr{z}^{p/q}(1+\snr{z}^{p})^{-1/q}$, therefore
$$\xi_{z}=\begin{cases}
\displaystyle
\ \frac{\snr{z}^{p-2}z}{(1+\snr{z}^{p})^{1-1/p}}\quad &\mbox{if} \ \ z\not =0\vspace{1.5mm}\\
\displaystyle
\ 0\quad &\mbox{if} \ \ z=0,\end{cases}$$
attains the maximum in the definition of $\ti{J}_{q}^{*}$. Identities \eqref{0.98}-\eqref{0.99} directly follow from the definitions of $A_{p}$ and $\ti{J}_{q}$. The proof is complete.
\end{proof}

\section{Light rays in the singular Born-Infeld integral}\label{lr.s}
\noindent In this section we construct a local maximizer of functional $\mathcal{J}_{q}$ which is affine on a segment. This serves as a main building block for our example. Unless otherwise mentioned, we assume that $1<q<2$ in \eqref{0.0}, $p=q/(q-1)$, and $n=2$. 
\subsection{The stress tensor}\label{ss.s} We prescribe that our candidate maximizer of $\mathcal{J}_{q}$ in \eqref{0.0} is globally "almost affine", namely a controlled analytic perturbation of an affine profile along a segment. We then calculate the modulus of its gradient, isolate the quantity measuring displacement from the light ray, and factor this deficit into the square of the transverse variable times an analytic factor. This factorization helps deriving an explicit expression for the singular stress field, that in turn will be used to determine the unknown function in the ansatz. We look for maxima of functional $\mathcal{J}_{q}$ in \eqref{0.0} of the form
\eqn{0.18}
$$u(x,y):=x+y^{2}\gamma(x,y),$$
for some analytic function $\gamma$ to be identified later on. We compute 
\eqn{0.16}
$$
\partial_{x}u(x,y)=1+y^{2}\partial_{x}\gamma(x,y)\qquad \mbox{and}\qquad \partial_{y}u(x,y)=2y\left(\gamma(x,y)+\frac{y\partial_{y}\gamma(x,y)}{2}\right).
$$
For simplicity define
\eqn{0.7}
$$
\Gamma_{1}(x,y):=1+y^{2}\partial_{x}\gamma(x,y)\qquad \mbox{and}\qquad \Gamma_{2}(x,y):=\gamma(x,y)+\frac{y\partial_{y}\gamma(x,y)}{2},
$$
so that
$$
Du(x,y)=\left(\Gamma_{1}(x,y),2y\Gamma_{2}(x,y)\right)
$$
and
\begin{eqnarray}\label{0.4}
\snr{Du(x,y)}^{q}&=&\left(\snr{Du(x,y)}^{2}\right)^{\frac{q}{2}}=\left(\Gamma_{1}(x,y)^{2}+4y^{2}\Gamma_{2}(x,y)^{2}\right)^{\frac{q}{2}}\nonumber \\
&=&\left(1+2y^{2}\partial_{x}\gamma(x,y)+y^{4}(\partial_{x}\gamma(x,y))^{2}+4y^{2}\Gamma_{2}(x,y)^{2}\right)^{\frac{q}{2}}=\left(1+y^{2}\Gamma_{0}(x,y)\right)^{\frac{q}{2}},
\end{eqnarray}
where we set 
\eqn{0.15}
$$\Gamma_{0}(x,y):=2\partial_{x}\gamma(x,y)+y^{2}(\partial_{x}\gamma(x,y))^{2}+4\Gamma_{2}(x,y)^{2}.$$ With $(t,\tx{d})\in \mathbb{R}\times \mathbb{R}$, $\snr{t^{2}\tx{d}}<1$, we define
\eqn{0.5}
$$
\ti{\mathcal{C}}(t,\tx{d}):=-\frac{q\tx{d}}{2}\int_{0}^{1}(1+\tau t^{2}\tx{d})^{\frac{q-2}{2}}\d\tau,
$$
which is analytic for $\snr{t^{2}\tx{d}}<1$ as the expansion in binomial series 
$$
\left(1+t^{2}\tx{d}\right)^{\frac{q}{2}}=\sum_{i=0}^{\infty}\binom{q/2}{i}t^{2i}\tx{d}^{i},
$$
holds, so if $t\not =0$ we have,
\begin{eqnarray*}
\ti{\mathcal{C}}(t,\tx{d})&=&\frac{1-(1+t^{2}\tx{d})^{\frac{q}{2}}}{t^{2}}=-\sum_{i=1}^{\infty}\binom{q/2}{i}t^{2(i-1)}\tx{d}^{i}\nonumber \\
&=&-\frac{q\tx{d}}{2}-\left(\frac{q(q-2)}{8}\right)t^{2}\tx{d}^{2}+\tx{g}(t,\tx{d}),
\end{eqnarray*}
where we set $\tx{g}(t,\tx{d}):=-\sum_{i=3}^{\infty}\binom{q/2}{i}t^{2(i-1)}\tx{d}^{i}\equiv \tx{O}(t^{4}\tx{d}^{3})$, while in $t=0$ it is $\ti{\mathcal{C}}(0,\tx{d})=-q\tx{d}/2$, so that $\ti{\mathcal{C}}$ can be expanded as
$$
\ti{\mathcal{C}}(t,\tx{d}):=\begin{cases}
    \displaystyle
    \ \frac{1-(1+t^{2}\tx{d})^{\frac{q}{2}}}{t^{2}}\quad &\mbox{if} \ \ t\not =0\vspace{1.5mm}\\
    \displaystyle
    \ -\frac{q\tx{d}}{2}\quad &\mbox{if} \ \ t=0.
\end{cases}
$$
In a neighborhood of $(t,\tx{d})=(0,-2)$, $\ti{\mathcal{C}}$ is analytic and $\ti{\mathcal{C}}(0,-2)=q>0$. We also introduce function 
\eqn{0.3}
$$\ti{\mathcal{S}}(t,\tx{d}):=\frac{(1+t^{2}\tx{d})^{(q-2)/2}}{\ti{\mathcal{C}}(t,\tx{d})^{1/p}},$$
which is analytic again in a neighborhood of $(0,-2)$ given that both terms in \eqref{0.3} are positive. We then introduce compositions
\eqn{0.20}
$$
\mathcal{C}(x,y):=\ti{\mathcal{C}}(y,\Gamma_{0}(x,y))\qquad \mbox{and}\qquad \mathcal{S}(x,y):=\ti{\mathcal{S}}(y,\Gamma_{0}(x,y)).
$$
Merging \eqref{0.4}-\eqref{0.5}, we obtain the split
\eqn{0.6}
$$
1-\snr{Du(x,y)}^{q}=y^{2}\mathcal{C}(x,y).
$$
Next, recalling that $J_{q}(z)=-\ti{J}_{q}(z)$ for all $z\in \bar{\mathbb{B}}_{n}$, via \eqref{0.7}, \eqref{0.4}, \eqref{0.3}, and \eqref{0.6}, if $y\not =0$ we compute the stress tensor 
\begin{eqnarray}\label{0.26}
\partial \ti{J}_{q}(Du(x,y))&=&\frac{\snr{Du(x,y)}^{q-2}Du(x,y)}{(1-\snr{Du}^{q})^{1-1/q}}\nonumber \\
&=&\frac{(1+y^{2}\Gamma_{0}(x,y))^{(q-2)/2}}{(y^{2}\mathcal{C}(x,y))^{1-1/q}}\left(\Gamma_{1}(x,y),2y\Gamma_{2}(x,y)\right)\nonumber \\
&=&\left(\frac{\mathcal{S}(x,y)}{\snr{y}^{2/p}}\right)\left(\Gamma_{1}(x,y),2y\Gamma_{2}(x,y)\right),
\end{eqnarray}
and its divergence
\begin{eqnarray}\label{0.29}
\diver(\partial \ti{J}_{q}(Du(x,y)))&=&\snr{y}^{-2/p}\partial_{x}(\mathcal{S}(x,y)\Gamma_{1}(x,y))+\partial_{y}\left(\frac{2y\mathcal{S}(x,y)\Gamma_{2}(x,y)}{\snr{y}^{2/p}}\right)\nonumber \\
&=&2\left(1-\frac{2}{p}\right)\left(\frac{\mathcal{S}(x,y)\Gamma_{2}(x,y)}{\snr{y}^{2/p}}\right)+2y\snr{y}^{-\frac{2}{p}}\partial_{y}(\mathcal{S}(x,y)\Gamma_{2}(x,y))\nonumber \\
&&+\snr{y}^{-2/p}\partial_{x}(\mathcal{S}(x,y)\Gamma_{1}(x,y))=:\frac{\mathcal{R}[\gamma](x,y)}{\snr{y}^{2/p}}.
\end{eqnarray}
Above, we set
\eqn{0.8}
$$
\mathcal{R}[\gamma](x,y):=\partial_{x}(\mathcal{S}(x,y)\Gamma_{1}(x,y))+2\left(1-\frac{2}{p}\right)\mathcal{S}(x,y)\Gamma_{2}(x,y)+2y\partial_{y}\left(\mathcal{S}(x,y)\Gamma_{2}(x,y)\right),
$$
for $y\not =0$. Let us explicitly compute the various terms on the right-hand side of \eqref{0.8}. For shortness, in the forthcoming computations we will suppress the dependency on variables $(x,y)$. We expand
\begin{eqnarray}\label{0.8.1}
\partial_{x}(\mathcal{S}\Gamma_{1})&=&\Gamma_{1}\partial_{x}\mathcal{S}+\mathcal{S}\partial_{x}\Gamma_{1}=y^{2}\mathcal{S}\partial^{2}_{x}\gamma+\Gamma_{1}\partial_{\tx{d}}\ti{\mathcal{S}}\partial_{x}\Gamma_{0}\nonumber \\
&=&y^{2}\mathcal{S}\partial^{2}_{x}\gamma+2\Gamma_{1}\partial_{\tx{d}}\ti{\mathcal{S}}\left(\partial_{x}^{2}\gamma\left(1+y^{2}\partial_{x}\gamma\right)+4\Gamma_{2}\left(\partial_{x}\gamma+\frac{y\partial^{2}_{xy}\gamma}{2}\right)\right)\nonumber \\
&=&\left(y^{2}\mathcal{S}+2\Gamma_{1}^{2}\partial_{\tx{d}}\ti{\mathcal{S}}\right)\partial^{2}_{x}\gamma+4y\Gamma_{1}\Gamma_{2}\partial_{\tx{d}}\ti{\mathcal{S}}\partial^{2}_{xy}\gamma+8\Gamma_{1}\Gamma_{2}\partial_{\tx{d}}\ti{\mathcal{S}}\partial_{x}\gamma
\end{eqnarray}
and
\begin{eqnarray}\label{0.8.2}
2y\partial_{y}(\mathcal{S}\Gamma_{2})&=&2y\left(\mathcal{S}\partial_{y}\Gamma_{2}+\Gamma_{2}\left(\partial_{t}\ti{\mathcal{S}}+\partial_{\tx{d}}\ti{\mathcal{S}}\partial_{y}\Gamma_{0}\right)\right)\nonumber \\
&=&y\left(\mathcal{S}\left(3\partial_{y}\gamma+y\partial^{2}_{y}\gamma\right)+2\Gamma_{2}\partial_{t}\ti{\mathcal{S}}\right)\nonumber \\
&&+2y\Gamma_{2}\partial_{\tx{d}}\ti{\mathcal{S}}\left(2\partial^{2}_{xy}\gamma+2y(\partial_{x}\gamma)^{2}+2y^{2}\partial_{x}\gamma\partial^{2}_{xy}\gamma+4\Gamma_{2}\left(3\partial_{y}\gamma+y\partial^{2}_{y}\gamma\right)\right)\nonumber \\
&=&\left(y^{2}\mathcal{S}+8y^{2}\Gamma_{2}^{2}\partial_{\tx{d}}\ti{\mathcal{S}}\right)\partial^{2}_{y}\gamma+4y\Gamma_{1}\Gamma_{2}\partial_{\tx{d}}\ti{\mathcal{S}}\partial^{2}_{xy}\gamma\nonumber \\
&&+\left(3y\mathcal{S}+24y\Gamma_{2}^{2}\partial_{\tx{d}}\ti{\mathcal{S}}\right)\partial_{y}\gamma+4y^{2}\Gamma_{2}\partial_{\tx{d}}\ti{\mathcal{S}}(\partial_{x}\gamma)^{2}+2y\Gamma_{2}\partial_{t}\ti{\mathcal{S}}.
\end{eqnarray}
Plugging the content of the two previous displays into \eqref{0.8} we obtain
\eqn{0.13}
$$
\mathcal{R}[\gamma]=\mathcal{c}_{0}\partial^{2}_{x}\gamma+2\mathcal{c}_{1}\partial^{2}_{xy}\gamma+\mathcal{c}_{2}\partial^{2}_{y}\gamma+\mathcal{L}_{0},
$$
where we set
\eqn{coeff}
$$
\begin{array}{c}
\displaystyle
\mathcal{c}_{0}:=y^{2}\mathcal{S}+2\Gamma_{1}^{2}\partial_{\tx{d}}\ti{\mathcal{S}},\qquad \quad \mathcal{c}_{1}:=4y\Gamma_{1}\Gamma_{2}\partial_{\tx{d}}\ti{\mathcal{S}},\qquad \quad \mathcal{c}_{2}:=y^{2}\left(\mathcal{S}+8\Gamma_{2}^{2}\partial_{\tx{d}}\ti{\mathcal{S}}\right)\vspace{1.5mm}\\
\displaystyle
\mathcal{L}_{0}:=\left(3y\mathcal{S}+24y\Gamma_{2}^{2}\partial_{\tx{d}}\ti{\mathcal{S}}\right)\partial_{y}\gamma+4y^{2}\Gamma_{2}\partial_{\tx{d}}\ti{\mathcal{S}}(\partial_{x}\gamma)^{2}+8\Gamma_{1}\Gamma_{2}\partial_{\tx{d}}\ti{\mathcal{S}}\partial_{x}\gamma+2y\Gamma_{2}\partial_{t}\ti{\mathcal{S}}+2\left(1-\frac{2}{p}\right)\mathcal{S}\Gamma_{2}.
\end{array}
$$
\subsection{The auxiliary equation}\label{ae.s} In view of \eqref{0.29}, we seek $\gamma$ satisfying $\mathcal{R}[\gamma]=0$. Using \eqref{0.13}-\eqref{coeff}, we show that a suitable segment of $\{x=0\}$ is noncharacteristic, rewrite $\mathcal{R}[\gamma]=0$ in Cauchy-Kovalewskaya normal form for $\partial^{2}_{x}\gamma$ and solve the resulting Cauchy problem. We set $\rr_{q}:=2^{-10}(q-1)q^{-1}$ and introduce the open sets 
\eqn{0.9}
$$
\begin{cases}
\displaystyle
\ \mathcal{Q}_{\rr_{q}}:=\{(x,y)\in \mathbb{R}^{2}\colon \snr{x}<\rr_{q}, \ \snr{y}<\rr_{q}\}\vspace{1.5mm}\\
\displaystyle
\ \mathcal{U}_{q}:=\left\{(x,y,z)\in \mathcal{Q}_{\rr_{q}}\times \mathbb{R}^{5}\colon\snr{z_{i}}<\rr_{q},\ i\in \{1,3\}, \ \snr{z_{2}+1}<\rr_{q}, \ \snr{z_{i}}<1, \ i\in \{4,5\}\right\},
\end{cases}$$ 
and, formally identifying $\mathbb{R}^{2}\times \mathbb{R}^{5}\ni(x,y,z)\equiv (x,y,(\gamma,\partial_{x}\gamma,\partial_{y}\gamma,\partial^{2}_{xy}\gamma,\partial^{2}_{y}\gamma))$, we see that
$$
\Gamma_{1}=1+y^{2}z_{2},\qquad\quad \Gamma_{2}:=z_{1}+\frac{yz_{3}}{2},\qquad\quad \Gamma_{0}=2z_{2}+y^{2}z_{2}^{2}+4\left(z_{1}+\frac{yz_{3}}{2}\right)^{2}
$$
The restrictions in \eqref{0.9} guarantee that
$$
\snr{\Gamma_{0}+2}\le2\snr{z_{2}+1}+y^{2}z_{2}^{2}+4\left(z_{1}+\frac{yz_{3}}{2}\right)^{2}\le 2\rr_{q}+\rr_{q}^{2}(1+\rr_{q})^{2}+4\rr_{q}^{2}(1+\rr_{q})^{2}<4\rr_{q},
$$
therefore 
\eqn{0.90}
$$(y,\Gamma_{0})\in\mathcal{V}_{q}:=\{(t,\tx{d})\in \mathbb{R}^{2}\colon \snr{t}<\rr_{q}, \ \snr{\tx{d}+2}<4\rr_{q}\}.$$ Moreover, if $(t,\tx{d})\in \mathcal{V}_{q}$ and $\snr{t}<\rr_{q}$, it is $1<-\tx{d}<4$, $\snr{t^{2}\tx{d}}<4\rr_{q}^{2}$, thus
\eqn{0.10}
$$
\min\left\{1+t^{2}\tx{d},(1+t^{2}\tx{d})^{(q-2)/2}\right\}>\frac{1}{2},\qquad \quad 4>\ti{\mathcal{C}}(t,\tx{d})>\frac{1}{4},\qquad \quad \ti{\mathcal{S}}(t,\tx{d})\ge \frac{1}{2^{3}},
$$
$\Gamma_{1}\ge 1/2$ on $\mathcal{U}_{q}$, and $\ti{\mathcal{S}}$ in \eqref{0.3}, and its partial derivatives $\partial_{t}\ti{\mathcal{S}}$, $\partial_{\tx{d}}\ti{\mathcal{S}}$ are analytic in $\mathcal{V}_{q}$. In particular, as $\partial_{\tx{d}}\ti{\mathcal{C}}(t,\tx{d})=-2^{-1}q(1+t^{2}\tx{d})^{-1+q/2}$ and \eqref{0.10} holds, we have
\begin{eqnarray}\label{0.11}
\frac{\partial_{\tx{d}}\ti{\mathcal{S}}(t,\tx{d})}{\ti{\mathcal{S}}(t,\tx{d})}&=&\left(\frac{q-2}{2}\right)\frac{t^{2}}{1+t^{2}\tx{d}}+\frac{q(1+t^{2}\tx{d})^{-1+q/2}}{2p\ti{\mathcal{C}}(t,\tx{d})}\nonumber \\
&\ge&\frac{q-1}{2^{4}}-(2-q)\rr_{q}\ge\frac{q-1}{2^{5}},
\end{eqnarray}
for all $(t,\tx{d})\in \mathcal{V}_{q}$. Combining \eqref{0.10}-\eqref{0.11}, we obtain
\eqn{0.12}
$$
\mathcal{c}_{0}=\mathcal{S}\left(y^{2}+\frac{2\Gamma_{1}^{2}\partial_{\tx{d}}\ti{\mathcal{S}}}{\mathcal{S}}\right)\ge \mathcal{S}\left(y^{2}+\frac{(q-1)\Gamma_{1}^{2}}{2^{4}}\right)\ge \frac{q-1}{2^{10}}\qquad \mbox{on} \ \ \mathcal{U}_{q},
$$
and $\{x=0\}\cap \mathcal{Q}_{\rr_{q}}$ is noncharacteristic. Keeping in mind \eqref{0.8.1}-\eqref{0.8.2}, our goal is to solve the auxiliary equation 
\eqn{0.30}
$$
\mathcal{R}[\gamma](x,y)=0\qquad \mbox{in a neighborhood of} \ \ (0,0).
$$
Although the divergence identity leading to $\mathcal{R}[\gamma]$ was initially derived for $y\not =0$, the differential expression $\mathcal{R}[\gamma]$ itself contains no negative power of $\snr{y}$, and all quantities in \eqref{0.13} extend to $\{y=0\}$ on $\mathcal{U}_{q}$. In fact, by \eqref{0.90} we see that on $\mathcal{U}_{q}$ it is $\Gamma_{0}<-1$, so $-q\Gamma_{0}/2>0$. We also have 
$$
\partial_{t}\ti{\mathcal{S}}(0,\tx{d})=0,\qquad \quad \partial_{\tx{d}}\ti{\mathcal{S}}(0,\tx{d})\stackrel{\eqref{0.11}}{=}\frac{q}{2p}\left(-\frac{q\tx{d}}{2}\right)^{-1-\frac{1}{p}},
$$
for all $(0,\tx{d})\in \mathcal{V}_{q}$, where the first identity above is due to the evenness in $t$ of $t\mapsto \ti{\mathcal{S}}(t,\cdot)$. Moreover, 
$$
\begin{array}{c}
\displaystyle
\mathcal{C}(x,0)=-\frac{q\Gamma_{0}(x,0)}{2},\qquad \quad \mathcal{S}(x,0)=\left(-\frac{q\Gamma_{0}(x,0)}{2}\right)^{-\frac{1}{p}},\vspace{1.5mm}\\
\displaystyle \partial_{\tx{d}}\ti{\mathcal{S}}(0,\Gamma_{0}(x,0))= \frac{q}{2p}\left(-\frac{q\Gamma_{0}(x,0)}{2}\right)^{-1-1/p}.
\end{array}
$$
Finally, $\Gamma_{1}(x,0)=1$ and $\Gamma_{2}(x,0)=\gamma(x,0)$, therefore all coefficients in \eqref{coeff} extend to $\{y=0\}$ as 
$$
\begin{array}{c}
\displaystyle
\left.\mathcal{c}_{0}\right|_{\{y=0\}}=\frac{q}{p}\left(-\frac{q\Gamma_{0}(x,0)}{2}\right)^{-1-\frac{1}{p}},\qquad \quad \left.\mathcal{c}_{1}\right|_{\{y=0\}}=0,\qquad \quad \left.\mathcal{c}_{2}\right|_{\{y=0\}}=0\vspace{1.5mm}\\
\displaystyle
\left.\mathcal{L}_{0}\right|_{\{y=0\}}=\left(\frac{4q\gamma(x,0)\partial_{x}\gamma(x,0)}{p}\right)\left(-\frac{q\Gamma_{0}(x,0)}{2}\right)^{-1-\frac{1}{p}}+2\gamma(x,0)\left(1-\frac{2}{p}\right)\left(-\frac{q\Gamma_{0}(x,0)}{2}\right)^{-\frac{1}{p}}.
\end{array}
$$
Consequently,
$$
\mathcal{R}[\gamma](x,0)=\mathcal{c}_{0}\partial^{2}_{x}\gamma(x,0)+\left.\mathcal{L}_{0}\right|_{\{y=0\}}
$$
is the analytic extension of $\mathcal{R}[\gamma]$ through $\{y=0\}$. Back to \eqref{0.30}, under the identification
$\mathbb{R}^{2}\times\mathbb{R}^{5}\ni(x,y,z)
\equiv(x,y,\gamma,\partial_{x}\gamma,\partial_{y}\gamma,
\partial_{xy}^{2}\gamma,\partial_{y}^{2}\gamma),$ the inclusion $(y,\Gamma_{0})\in\mathcal V_{q}$ on $\mathcal{U}_{q}$ together with the analyticity of $\ti{\mathcal{S}}, \partial_{t}\ti{\mathcal{S}}, \partial_{\tx{d}}\ti{\mathcal{S}}$ on $\mathcal{V}_{q}$ shows that all the coefficients in \eqref{0.13} are real analytic functions of $(x,y,z)$ on $\mathcal{U}_{q}$. Via the lower bound \eqref{0.12}, we can solve \eqref{0.13} for $\partial_{x}^{2}\gamma$. In fact, with the coefficients in \eqref{0.13} understood as functions of $(x,y,z)$, define the real analytic map
$$
\mathcal{U}_{q}\ni (x,y,z)\mapsto\mathcal{G}(x,y,z):=-\frac{2\mathcal{c}_{1}z_{4}+\mathcal{c}_{2}z_{5}+\mathcal{L}_{0}}{\mathcal{c}_{0}},
$$
and \eqref{0.30} becomes 
\eqn{0.48}
$$\partial^{2}_{x}\gamma(x,y)=\mathcal{G}(x,y,\gamma,\partial_{x}\gamma,\partial_{y}\gamma,
\partial_{xy}^{2}\gamma,\partial_{y}^{2}\gamma).$$ Notice that every derivative occurring on the right-hand side has $x$-order strictly smaller than $2$; in particular, the dependence on $\partial_{xy}^{2}\gamma$ and $\partial_{y}^{2}\gamma$ matches the assumptions of Cauchy-Kowalevskaya theorem, \cite[Chapter 1]{dib09}. We couple \eqref{0.48} with analytic Cauchy data $\gamma(0,y)=0$, $\partial_{x}\gamma(0,y)=-1$ for $\snr{y}<\rr_{q}$, satisfying $\partial_{y}\gamma(0,y)=\partial^{2}_{y}\gamma(0,y)=0=\partial^{2}_{xy}\gamma(0,y)=0$ so that, recalling also \eqref{0.9}, $(0,y,0,-1,0,0,0)\in \mathcal{U}_{q}$ for all $\snr{y}<\rr_{q}$. We then consider problem
\eqn{0.14}
$$
\begin{cases}
\displaystyle
\ \partial^{2}_{x}\gamma(x,y)=\mathcal{G}(x,y,\gamma,\partial_{x}\gamma,\partial_{y}\gamma,\partial^{2}_{xy}\gamma,\partial^{2}_{y}\gamma)\vspace{1.5mm}\\
\displaystyle
\ \gamma(0,y)=0,\qquad \quad \partial_{x}\gamma(0,y)=-1.
\end{cases}
$$
Cauchy-Kowalevskaya theorem \cite[Chapter 1]{dib09} applies and yields an open neighborhood $\mathcal{Q}_{0}\Subset \mathcal{Q}_{\rr_{q}}$ of $(0,0)$ and a real-analytic solution $\gamma$ of \eqref{0.14} on $\mathcal{Q}_{0}$ uniquely determined by the prescribed Cauchy data. Up to shrink it, we can assume that $\mathcal{Q}_{0}$ is symmetric in $y$, in the sense that $(x,y)\in \mathcal{Q}_{0}$ iff $(x,-y)\in \mathcal{Q}_{0}$. Observe that $\gamma$ is even in the $y$-variable. In fact, with $\bar{\gamma}(x,y):=\gamma(x,-y)$, via \eqref{0.13} we have that $\mathcal{R}[\bar{\gamma}](x,y)=\mathcal{R}[\gamma](x,-y)$. Moreover, $\bar{\gamma}$ match the Cauchy data in $\eqref{0.14}_{2}$, so uniqueness in Cauchy-Kowalevskaya theorem grants that $\bar{\gamma}=\gamma$. Introduce now the continuous map
$$
\mathcal{Q}_{0}\ni (x,y)\mapsto\Phi_{\gamma}(x,y):=\left(x,y,\gamma(x,y),\partial_{x}\gamma(x,y),\partial_{y}\gamma(x,y),
\partial_{xy}^{2}\gamma(x,y),\partial_{y}^{2}\gamma(x,y)\right),
$$
and observe that $\eqref{0.14}_{2}$ implies that $\Phi_{\gamma}(0,0)\in \mathcal{U}_{q}$. Since $\mathcal{U}_{q}$ is open and $\Phi_{\gamma}$ is continuous, we can find a positive number $\ell_{p}\equiv \ell_{p}(p)>0$ and cube $Q_{\ell_{p}}\equiv Q_{p}:=(-\ell_{p},\ell_{p})^{2}\Subset \mathcal{Q}_{0}$ such that
\eqn{0.17.1}
$$\Phi_{\gamma}(\bar{Q}_{p})\Subset \mathcal{U}_{q},$$ therefore $\gamma$ solves \eqref{0.30} in $Q_{p}$. Up to reduce the size of $\ell_{p}$, by \eqref{0.15}, \eqref{0.10} and $\eqref{0.14}_{2}$, it holds
\eqn{0.17}
$$
\begin{cases}
    \displaystyle
    \ \bar{Q}_{p}\subset \{(x,y)\in \mathcal{Q}_{0}\colon\Gamma_{0}(x,y)<-1\}\vspace{1.5mm}\\
    \displaystyle
    \ \frac{1}{c_{p}}\le \mathcal{C}(x,y)\le c_{p}\quad \mbox{for all} \ \ (x,y)\in \bar{Q}_{p},
\end{cases}
$$
for some $c_{p}\equiv c_{p}(p)>1$. We then look back at \eqref{0.16}: we have $Du(x,0)=(1,0)$ for all $(x,0)\in \bar{Q}_{p}$ and, by \eqref{0.4} and \eqref{0.17} also that $\snr{Du(x,y)}^{2}=1+y^{2}\Gamma_{0}(x,y)<1$ whenever $y\not =0$. This and the discussion in Section \ref{dbi.s} imply that $[-\ell_{p},\ell_{p}]\times\{0\}$ is the only maximally extended light ray for $u$, that is $\eqref{0.37}_{2}$. 
\subsection{Maximality of $u$} We analyze the stress near the light ray and verify the weak Euler equation across it. We first compute the two components of the stress field and show that the tangential component has an integrable power singularity, while the normal component tends to zero. We then use this information to prove that the stress remains divergence-free in the sense of distributions across the light segment and no concentration occurs. As a result, function $u$ in \eqref{0.18} is a local maximizer of integral $\mathcal{J}_{q}$. By \eqref{0.17}, $\mathcal{C}(x,0)$ is strictly positive on $[-\ell_{p},\ell_{p}]$, and, by \eqref{0.7} it is $\Gamma_{1}(x,0)=1$, $\Gamma_{2}(x,0)=\gamma(x,0)$. We then look back at \eqref{0.26} and, keeping in mind that $\Gamma_{1}$, $\Gamma_{2}$, $\mathcal{S}$ are analytic in a neighborhood of $\bar{Q}_{p}$, and that $\gamma$, $\Gamma_{i}$, $i\in \{0,1,2\}$, $\mathcal{C}$, $\mathcal{S}$ are even in $y$, we expand
$$
\begin{cases}
    \displaystyle
    \ \mathcal{S}(x,y)\Gamma_{1}(x,y)=\mathcal{S}(x,0)\Gamma_{1}(x,0)+\tx{O}(y^{2})=\left(-\frac{q\Gamma_{0}(x,0)}{2}\right)^{-1/p}+\tx{O}(y^{2})\vspace{1.5mm}\\
    \displaystyle
    \ \mathcal{S}(x,y)\Gamma_{2}(x,y)=\mathcal{S}(x,0)\Gamma_{2}(x,0)+\tx{O}(y^{2})=\gamma(x,0)\left(-\frac{q\Gamma_{0}(x,0)}{2}\right)^{-1/p}+\tx{O}(y^{2}),
\end{cases}
$$
uniformly in $x$, so that \eqref{0.26} reads as
\eqn{0.38}
$$
\partial\ti{J}_{q}(Du(x,y)):=\left(\ti{\tx{J}}^{1}_{q}(x,y),\ti{\tx{J}}^{2}_{q}(x,y)\right),
$$
for a.e. $(x,y)\in Q_{p}$, where it is
\eqn{0.38.1}
$$
\begin{cases}
    \displaystyle
    \ \ti{\tx{J}}^{1}_{q}(x,y)=\left(-\frac{q\snr{y}^{2}\Gamma_{0}(x,0)}{2}\right)^{-\frac{1}{p}}+\tx{O}(\snr{y}^{2-\frac{2}{p}})\vspace{1.5mm}\\
    \displaystyle
    \ \ti{\tx{J}}^{2}_{q}(x,y)=2\gamma(x,0)\textnormal{sgn}(y)\snr{y}^{1-\frac{2}{p}}\left(-\frac{q\Gamma_{0}(x,0)}{2}\right)^{-1/p}+\tx{O}(\snr{y}^{3-\frac{2}{p}}).
\end{cases}
$$
We then bound,
\eqn{0.27}
$$
\snr{\ti{\tx{J}}^{1}_{q}(x,y)}\stackrel{\eqref{0.17.1},\eqref{0.17}}{\le}\frac{c(q)}{\snr{y}^{2/p}},\qquad \quad \snr{\ti{\tx{J}}^{2}_{q}(x,y)}\stackrel{\eqref{0.17.1},\eqref{0.17}}{\le}c(q)\snr{y}^{1-2/p},
$$
for all $(x,y)\in Q_{p}$, $\snr{y}\not =0$. More precisely, as $p>2$, $\snr{\ti{\tx{J}}^{2}_{q}(x,y)}\to 0$ as $\snr{y}\to 0$ uniformly in $x\in [-\ell_{p},\ell_{p}]$, and $\partial\ti{J}_{q}(Du)\in L^{\frac{p}{2},\infty}(Q_{p},\mathbb{R}^{2})$. In fact, for every $\lambda>0$ we have
$$
Q_{p}\cap\left\{\snr{\partial\ti{J}_{q}(Du)}>\lambda\right\}\stackrel{\eqref{0.27}}{\subseteq}Q_{p}\cap\{c\snr{y}^{-\frac{2}{p}}>\lambda\},
$$
for $c\equiv c(q)$, so
\eqn{0.28}
$$
\lambda^{\frac{p}{2}}\left|Q_{p}\cap\left\{\snr{\partial\ti{J}_{q}(Du)}>\lambda\right\}\right|\le \lambda^{\frac{p}{2}}\left|Q_{p}\cap\{\snr{y}<(c/\lambda)^{\frac{p}{2}}\}\right|\le c(q),
$$
and $\eqref{0.37}_{1}$ is proven. Finally, we show that $\diver(\partial\ti{J}_{q}(Du))$ does not concentrate across the light segment. Fix $\varphi\in C^{\infty}_{c}(Q_{p})$, $0<\delta<\ell_{p}$, set $Q_{p;\delta}:=(-\ell_{p},\ell_{p})\times (\delta,\ell_{p})\cup (-\ell_{p},\ell_{p})\times (-\ell_{p},-\delta)$, observe that
\eqn{0.31}
$$
\snr{Q_{p}\setminus Q_{p;\delta}}=4\ell_{p}\delta\to_{\delta\to 0}0,
$$
and split
\begin{eqnarray*}
\int_{Q_{p}}\langle\partial\ti{J}_{q}(Du),D\varphi\rangle\dx\dy&=&\int_{Q_{p;\delta}}\langle\partial\ti{J}_{q}(Du),D\varphi\rangle\dx\dy\nonumber \\
&&+\int_{Q_{p}\setminus Q_{p;\delta}}\langle\partial\ti{J}_{q}(Du),D\varphi\rangle\dx\dy=:\mbox{(I)}_{\delta}+\mbox{(II)}_{\delta}.
\end{eqnarray*}
As on $Q_{p;\delta}$ it is $\snr{y}\not =0$, \eqref{0.29} and \eqref{0.30} yield that $\diver(\partial\ti{J}_{q}(Du))=0$, so integrating by parts in $\mbox{(I)}_{\delta}$ we have
\begin{eqnarray*}
\snr{(\mbox{I})_{\delta}}&\le&\left|\int_{Q_{p;\delta}}\diver(\partial\ti{J}_{q}(Du))\varphi\dx\dy\right|+\left|\int_{-\ell_{p}}^{\ell_{p}}\ti{\tx{J}}^{2}_{q}(x,\delta)\varphi(x,\delta)\dx\right|\nonumber \\
&&+\left|\int_{-\ell_{p}}^{\ell_{p}}\ti{\tx{J}}^{2}_{q}(x,-\delta)\varphi(x,-\delta)\dx\right|\stackrel{\eqref{0.27}}{\le}c(q)\nr{\varphi}_{L^{\infty}(Q_{p})}\delta^{1-\frac{2}{p}}\stackrel{p>2}{\to_{\delta\to 0}}0,
\end{eqnarray*}
while, concerning $(\mbox{II})_{\delta}$, it is
$$
\snr{\mbox{(II)}_{\delta}}\le c(q)\nr{D\varphi}_{L^{\infty}(Q_{p})}\snr{Q_{p}\setminus Q_{p;\delta}}^{1-\frac{2}{p}}[\partial\ti{J}_{q}(Du)]_{\frac{p}{2},\infty;Q_{p}}\stackrel{\eqref{0.28},\eqref{0.31}}{\to_{\delta\to 0}}0,
$$
therefore we can conclude that
\eqn{0.32}
$$
\int_{Q_{p}}\langle\partial\ti{J}_{q}(Du),D\varphi\rangle\dx\dy=0\qquad \mbox{for all} \ \ \varphi\in C^{\infty}_{c}(Q_{p})
$$
and $\diver(\partial \ti{J}_{q}(Du))=0$ on $Q_{p}$ in the sense of distributions. By \eqref{0.28} and the weak* density of smooth, compactly supported maps, we can extend the validity of \eqref{0.32} to all $\varphi\in W^{1,\infty}(Q_{p})\cap W^{1,1}_{0}(Q_{p})$, that is \eqref{0.36}. This and the convexity of $\ti{J}_{q}$ imply that $\ti{\mathcal{J}}_{q}(u;Q_{p})\le \ti{\mathcal{J}}_{q}(w;Q_{p})$ for all $w\in \mathcal{D}_{\infty}(u;Q_{p})$, cf. \eqref{0.33}, and the conclusion follows from \eqref{0.2} and the definitions in \eqref{0.34}-\eqref{0.35}.\\\\
\noindent All in all, we have just proven the following theorem.
\begin{theorem}\label{q.t}
Let $q\in (1,2)$ be any number. There exist $\ell_{p}\equiv \ell_{p}(q)>0$, a cube $Q_{p}:=(-\ell_{p},\ell_{p})^{2}\subset \mathbb{R}^{2}$ and a function $u\in W^{1,\infty}(Q_{p})$ with $\sup_{(x,y)\in \bar{Q}_{p}}\snr{Du(x,y)}\le 1$, analytic in a neighborhood of $Q_{p}$, such that $u$ is a local maximizer of functional $\mathcal{J}_{q}(\ \cdot\ ;Q_{p})$ in \eqref{0.0}. Specifically, integral identity
\eqn{0.36}
$$
\int_{Q_{p}}\langle\partial J_{q}(Du),D\varphi\rangle\dx\dy=0\qquad \mbox{for all} \ \ \varphi\in W^{1,1}_{0}(Q_{p})\cap W^{1,\infty}(Q_{p}),
$$
holds true,
\eqn{0.37}
$$
\begin{cases}
\displaystyle
\ \partial J_{q}(Du)\in L^{\frac{q}{2(q-1)},\infty}(Q_{p},\mathbb{R}^{2})\vspace{1.5mm}\\
\displaystyle
\ [-\ell_{p},\ell_{p}]\times \{0\}\mbox{ is the only maximally extended light ray of }u,
\end{cases}
$$
that is $u(x,0)=x$ for all $x\in [-\ell_{p},\ell_{p}]$ and $\snr{Du(x,y)}<1$ for all $(x,y)\in Q_{p}\setminus \{y=0\}$.
\end{theorem}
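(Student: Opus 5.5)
The proof consists in assembling the construction of Section \ref{lr.s} in order; the paper itself says ``we have just proven'' the statement.

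\textbf{Step 1: the ansatz and the auxiliary equation.} I would start from the ansatz \eqref{0.18}, $u=x+y^{2}\gamma$. Then \eqref{0.6} factors the light-ray deficit as $1-\snr{Du}^{q}=y^{2}\mathcal{C}$. Through \eqref{0.26}--\eqref{0.29}, the Euler--Lagrange operator away from $\{y=0\}$ reduces to $\snr{y}^{-2/p}\mathcal{R}[\gamma]$, where $\mathcal{R}[\gamma]$ is the quasilinear second-order operator \eqref{0.13}. Its coefficients are real analytic on $\mathcal{U}_{q}$, including across $y=0$.

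\textbf{Step 2: solving $\mathcal{R}[\gamma]=0$.} The lower bound \eqref{0.12} on $\mathcal{c}_{0}$ makes $\{x=0\}$ noncharacteristic. Hence \eqref{0.14}, with Cauchy data $\gamma(0,y)=0$ and $\partial_{x}\gamma(0,y)=-1$, can be put in normal form and solved by Cauchy--Kowalevskaya. Uniqueness gives evenness in $y$. Continuity of $\Phi_{\gamma}$ lets me shrink to a cube $Q_{p}$ on which \eqref{0.17.1}--\eqref{0.17} hold. In particular $\Gamma_{0}<-1$ on $\bar{Q}_{p}$, so $\snr{Du}^{2}=1+y^{2}\Gamma_{0}<1$ off $\{y=0\}$, while $Du(x,0)=(1,0)$. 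This gives $u(x,0)=x$, $\sup\snr{Du}\le1$, analyticity of $u$ near $\bar{Q}_{p}$, and, by the rigidity facts recalled in Section \ref{dbi.s}, that the segment $[-\ell_{p},\ell_{p}]\times\{0\}$ is the unique maximal light ray. That is $\eqref{0.37}_{2}$.

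\textbf{Step 3: the stress field near the ray.} Expanding $\mathcal{S}\Gamma_{1}$ and $\mathcal{S}\Gamma_{2}$ to order $y^{2}$ gives \eqref{0.38.1}. The tangential component is of size $\snr{y}^{-2/p}$ and the normal component of size $\snr{y}^{1-2/p}$. The level-set estimate \eqref{0.28} then gives $\partial J_{q}(Du)\in L^{p/2,\infty}$, which is $\eqref{0.37}_{1}$ since $p/2=q/(2(q-1))$.

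\textbf{Step 4: the weak Euler equation across the ray (main obstacle).} The real difficulty is excluding a singular divergence concentrated on the ray. I would excise the strip $\snr{y}<\delta$ and integrate by parts on the remainder, where the divergence vanishes. The boundary terms there involve only the normal component and are $\tx{O}(\delta^{1-2/p})$, which tends to $0$ because $p>2$. On the strip itself, H\"older's inequality in Lorentz spaces bounds the contribution by $\snr{\mbox{strip}}^{1-2/p}$ times the weak $L^{p/2}$ norm, and this also tends to $0$. This yields \eqref{0.32}.

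\textbf{Step 5: extension and maximality.} Since the stress lies in $L^{p/2,\infty}\subset L^{1}$, approximation extends \eqref{0.32} to test functions in $W^{1,1}_{0}\cap W^{1,\infty}$, which is \eqref{0.36}. Convexity of $\ti{J}_{q}$ then gives $\ti{J}_{q}(Dw)\ge\ti{J}_{q}(Du)+\langle\partial\ti{J}_{q}(Du),Dw-Du\rangle$ almost everywhere, so $u$ minimizes $\ti{\mathcal{J}}_{q}$ among competitors in $\mathcal{D}_{\infty}(u;Q_{p})$. The $\{y=0\}$ set is null, so the pointwise inequality is harmless there. By \eqref{0.2}, $u$ maximizes $\mathcal{J}_{q}$.
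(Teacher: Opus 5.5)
Your proposal is correct and follows the paper's own argument essentially step by step. The shared steps are the ansatz $u=x+y^{2}\gamma$ with $1-\snr{Du}^{q}=y^{2}\mathcal{C}$, the Cauchy--Kowalevskaya solution of $\mathcal{R}[\gamma]=0$ off the noncharacteristic line $\{x=0\}$, the $\snr{y}^{-2/p}$ and $\snr{y}^{1-2/p}$ bounds on the two stress components, excision of the strip $\{\snr{y}<\delta\}$ to rule out concentration on the ray, and the final extension-plus-convexity argument giving maximality.
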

\begin{remark}
\emph{The proof of Theorem \ref{q.t} has been Lean-formalized \href{https://github.com/seabiscs/def26_thm3.1_lean}{here}.}
\end{remark}
\section{Non-Lipschitz minima of the degenerate area functional}\label{nl.m} 
\noindent In this section we build on the local maximizer of integral $\mathcal{J}_{q}$ from Theorem \ref{q.t} to construct a non-Lipschitz local minimizer of functional $\mathcal{A}_{p}$ in \eqref{0.1}, and show that this function is also the unique generalized minimizer of $\mathcal{A}_{p}$ with respect to its own boundary conditions. The same assumptions and notation as in Section \ref{lr.s} will be in force here.
\subsection{Construction of the minimizer}\label{dm.s} We define our prospective minimizer by integrating the singular component of the stress field from Theorem \ref{q.t} in the $y$-variable and use the integrability of the singularity to prove that the resulting function is finite and continuous across the light line. We then calculate both weak derivatives and show that its gradient is exactly the quarter-turn of the Born-Infeld stress, which is divergence-free in the sense of distributions. In the light of \eqref{0.36}-$\eqref{0.37}_{1}$, we deduce that our newly constructed function is a minimizer of $\mathcal{A}_{p}$ with its prescribed trace and that it belongs to $W^{1;\frac{p}{2},\infty}(Q_{p})$. The strict convexity of $A_{p}$, cf. $\eqref{0.94}_{1}$, then gives uniqueness. With reference to \eqref{0.38}, we set
\eqn{0.39}
$$
\bar{Q}_{p}\ni (x,y)\mapsto \ti{v}(x,y):=\int_{0}^{y}\ti{\tx{J}}^{1}_{q}(x,\tau)\d\tau,
$$
so that
$$
\snr{\ti{v}(x,y)}\stackrel{\eqref{0.27}}{\le}c\int_{0}^{\snr{y}}\snr{\tau}^{-2/p}\d\tau\le c(p)\snr{y}^{1-\frac{2}{p}},
$$
and $\ti{v}\in C(\bar{Q}_{p})$ with $\ti{v}(x,0)=0$ for all $x\in \bar{Q}_{p}\cap \{y=0\}$. By the fundamental theorem of calculus in the $y$-variable, we have $\partial_{y}\ti{v}(x,y)=\ti{\tx{J}}^{1}_{q}(x,y)$. Moreover, as $u$ is analytic in a neighborhood of $Q_{p}$ - keep in mind that $\partial\ti{J}_{q}(Du)$ only displays a singular behavior in $y$ - via \eqref{0.26} and \eqref{0.27} it is 
\eqn{0.41}
$$
\snr{\partial_{x}\ti{\tx{J}}_{q}^{1}(x,y)}\le c(p)\snr{y}^{-2/p}\qquad \mbox{for a.e.} \ \ (x,y)\in Q_{p}\setminus \{y=0\},
$$
that is integrable, so we can differentiate under the integral in \eqref{0.39} to gain $\partial_{x}\ti{v}(x,y)=\int_{0}^{y}\partial_{x}\ti{\tx{J}}^{1}_{q}(x,\tau)\d\tau$. Recall also that, by \eqref{0.29} and \eqref{0.30}, it is $
\diver(\partial\ti{J}_{q}(Du))=0$ on $Q_{p}\setminus \{y=0\}$, therefore
\eqn{0.40}
$$
\partial_{x}\ti{\tx{J}}^{1}_{q}(x,y)=-\partial_{y}\ti{\tx{J}}_{q}^{2}(x,y)\qquad \mbox{for all} \ \ (x,y)\in Q_{p}\setminus \{y=0\}.
$$
By \eqref{0.41} we can then compute, for $y>0$,
$$
\partial_{x}\ti{v}(x,y)=\lim_{\delta\to 0^{+}}\int_{\delta}^{y}\partial_{x}\ti{\tx{J}}^{1}_{q}(x,\tau)\d\tau\stackrel{\eqref{0.40}}{=}-\lim_{\delta\to 0^{+}}\int_{\delta}^{y}\partial_{\tau}\ti{\tx{J}}^{2}_{q}(x,\tau)\d\tau\stackrel{\eqref{0.27}}{=}-\ti{\tx{J}}^{2}_{q}(x,y),
$$
and similarly, if $y<0$,
$$
\partial_{x}\ti{v}(x,y)=-\lim_{\delta\to 0^{-}}\int_{y}^{\delta}\partial_{x}\ti{\tx{J}}^{1}_{q}(x,\tau)\d\tau=\lim_{\delta\to 0^{-}}\int_{y}^{\delta}\partial_{\tau}\ti{\tx{J}}^{2}_{q}(x,\tau)\d\tau=-\ti{\tx{J}}^{2}_{q}(x,y).
$$
To summarize,
\eqn{0.42}
$$
\begin{array}{c}
\displaystyle
\partial_{x}\ti{v}(x,y)=-\ti{\tx{J}}^{2}_{q}(x,y),\qquad \quad \partial_{y}\ti{v}(x,y)=\ti{\tx{J}}^{1}_{q}(x,y)\vspace{1.5mm}\\
\displaystyle
\ \ \   D\ti{v}(x,y)=\left(-\ti{\tx{J}}^{2}_{q}(x,y),\ti{\tx{J}}^{1}_{q}(x,y)\right)=\partial\ti{J}_{q}(Du(x,y))^{\perp},
\end{array}
$$
a.e. in $Q_{p}$. In particular, we deduce that $\ti{v}\in C^{0,1-\frac{2}{p}}(\bar{Q}_{p})$. In fact, set $z_{1}:=(x_{1},y_{1})$, $z_{2}:=(x_{2},y_{2})\in \bar{Q}_{p}$ and estimate\footnote{Keep in mind the well-known equivalence valid for all vectors $y_{1},y_{2}\in \mathbb{R}^{n}$, $n\ge 2$, such that $\snr{y_{1}}+\snr{y_{2}}\not =0$, and any $\alpha<1$, that is $\int_{0}^{1}\snr{y_{2}+\tau(y_{1}-y_{2})}^{-\alpha}\d\tau\approx_{n,\alpha}(\snr{y_{2}}^{2}+\snr{y_{2}-y_{1}}^{2})^{-\alpha/2}$, cf. \cite[Lemma 4.2]{sch14}.}
\begin{eqnarray}\label{0.73}
\snr{\ti{v}(z_{1})-\ti{v}(z_{2})}&\le&\snr{\ti{v}(x_{1},y_{1})-\ti{v}(x_{1},y_{2})}+\snr{\ti{v}(x_{1},y_{2})-\ti{v}(x_{2},y_{2})}\nonumber \\
&\le&\left(\int_{0}^{1}\snr{\partial_{y}\ti{v}(x_{1},y_{2}+\tau(y_{1}-y_{2}))}\d\tau\right)\snr{y_{1}-y_{2}}\nonumber \\
&&+\left(\int_{0}^{1}\snr{\partial_{x}\ti{v}(x_{1}+\tau(x_{2}-x_{1}),y_{2})}\d\tau\right)\snr{x_{1}-x_{2}}\nonumber \\
&\stackrel{\eqref{0.27}}{\le}&c\left(\int_{0}^{1}\snr{y_{2}+\tau(y_{1}-y_{2})}^{-2/p}\d\tau\right)\snr{y_{1}-y_{2}}+c\snr{y_{2}}^{1-2/p}\snr{x_{1}-x_{2}}\nonumber \\
&\le&\frac{c\snr{y_{1}-y_{2}}}{(\snr{y_{2}}^{2}+\snr{y_{1}-y_{2}}^{2})^{1/p}}+c\snr{x_{1}-x_{2}}\le c(p)\snr{z_{1}-z_{2}}^{1-2/p}.
\end{eqnarray}
Now, as $\partial A_{p}$ is well-defined for all $z\in \mathbb{R}^{2}$ if $p>2$ and
\eqn{0.45}
$$
z\mapsto z^{\perp}:=(-z_{2},z_{1})\mbox{ is an isometry},
$$
we calculate
\begin{eqnarray}\label{0.43}
\partial A_{p}(D\ti{v})&\stackrel{\eqref{0.42}}{=}&\frac{\snr{\partial\ti{J}_{q}(Du)^{\perp}}^{p-2}\partial\ti{J}_{q}(Du)^{\perp}}{(1+\snr{\partial\ti{J}_{q}(Du)^{\perp}}^{p})^{1-1/p}}\stackrel{\eqref{0.45}}{=}\frac{\snr{\partial\ti{J}_{q}(Du)}^{p-2}\partial\ti{J}_{q}(Du)^{\perp}}{(1+\snr{\partial\ti{J}_{q}(Du)}^{p})^{1-1/p}}\nonumber \\
&=&\partial A_{p}(\partial\ti{J}_{q}(Du))^{\perp}\stackrel{\eqref{0.99}}{=}Du^{\perp},
\end{eqnarray}
a.e. in $Q_{p}$, where we also used that $\snr{\partial\ti{J}_{q}(Du)}=\snr{Du}^{q-1}(1-\snr{Du}^{q})^{-1+1/q}$. Since $u$ is analytic in a neighborhood of $Q_{p}$ we have that $Du^{\perp}\in C^{1}(\bar{Q}_{p})$ and $\partial A_{p}(D\ti{v})=Du^{\perp}$ a.e. in $Q_{p}$, and
\eqn{0.44}
$$
\diver(\partial A_{p}(D\ti{v}))\stackrel{\eqref{0.43}}{=}\diver(Du^{\perp})=-\partial^{2}_{xy}u+\partial^{2}_{yx}u=0\qquad \mbox{in} \ \ Q_{p}.
$$
The convexity of $z\mapsto A_{p}(z)$, \eqref{0.44}, \eqref{0.42}, and $\eqref{0.37}_{1}$ imply that $\ti{v}\in C(\bar{Q}_{p})\cap W^{1;\frac{p}{2},\infty}(Q_{p})$ is a local minimizer of functional $\mathcal{A}_{p}$ in \eqref{0.1}, i.e., $\mathcal{A}_{p}(\ti{v};Q_{p})\le \mathcal{A}_{p}(w;Q_{p})$ for all $w\in \ti{v}+W^{1,1}_{0}(Q_{p})$, and in particular by $\eqref{0.94}_{1}$, $\ti{v}\in \mathcal{D}_{0}(\ti{v};Q_{p})$ is the unique solution to Dirichlet problem $\min_{w\in \mathcal{D}_{0}(\ti{v};Q_{p})}\mathcal{A}_{p}(w;Q_{p})$. Since $\ti{v}\in W^{1;\frac{p}{2},\infty}(Q_{p})$ and $p>2$, Proposition \ref{rel.p} then applies, and $\ti{v}$ is also the unique generalized minimizer of $\mathcal{A}_{p}$ in $\mathcal{D}_{0}(\ti{v};Q_{p})$.
\subsection{Failure of local Lipschitz regularity}\label{f.s} Here we exploit the quantitative blow-up of $D\ti{v}$ across the light-line to show the failure of local Lipschitz continuity of $\ti{v}$ inside $Q_{p}$. By \eqref{0.42}-\eqref{0.45} we have that 
\eqn{0.71}
$$\snr{D\ti{v}(x,y)}=\snr{\partial\ti{J}_{q}(Du(x,y))^{\perp}}=\snr{\partial\ti{J}_{q}(Du(x,y))}\stackrel{\eqref{0.6}}{=}\frac{(1-y^{2}\mathcal{C}(x,y))^{1/p}}{\snr{y}^{2/p}\mathcal{C}(x,y)^{1/p}},$$
for all $(x,y)\in Q_{p}\cap\{y\not =0\}$. As $\mathcal{C}$ is analytic around $Q_{p}$ and even in the $y$-variable, we expand
$$
\mathcal{C}(x,y)=\mathcal{C}(x,0)+\tx{O}(y^{2}),\qquad \mbox{for all} \ \ x\in [-\ell_{p},\ell_{p}]
$$
so that, via \eqref{0.17}, it is $\mathcal{C}(x,y)\approx_{q}1$ for all $(x,y)\in Q_{p}$, thus $1-y^{2}\mathcal{C}(x,y)=1+\tx{O}(y^{2})$ on $Q_{p}$ and there exists $\sigma_{p}\equiv \sigma_{p}(p)\in (0,\ell_{p})$ such that
\eqn{0.47}
$$
\snr{D\ti{v}(x,y)}\ge\frac{(1-c_{p}y^{2})^{1/p}}{c_{p}\snr{y}^{2/p}}\ge \frac{1}{2c_{p}\snr{y}^{2/p}},
$$
for all $(x,y)\in S_{p}:=Q_{p}\cap \{\snr{y}< \sigma_{p}\}$, $y\not =0$. We then pick any $z_{0}:=(x_{0},0)\in S_{p}\cap \{y=0\}$ with $\dist(z_{0},\partial Q_{p})>0$, fix $\rr\in (0,\min\{\sigma_{p},\dist(z_{0},\partial Q_{p})\})$, $\lambda_{\rr}:=\max\{1,(2/\rr)^{2/p}(2c_{p})^{-1}\}$, $t\in (0,\infty)$, $M\in (\lambda_{\rr},\infty)$ and compute
\begin{eqnarray}\label{0.60}
[D\ti{v}]_{\frac{p}{2},t;B_{\rr}(z_{0})}^{t}&=&\frac{p}{2}\int_{0}^{\infty}\left(\lambda^{p/2}\snr{\{(x,y)\in B_{\rr}(z_{0})\colon\snr{D\ti{v}(x,y)}>\lambda\}}\right)^{\frac{2t}{p}}\frac{\d\lambda}{\lambda}\nonumber \\
&\stackrel{\eqref{0.47}}{\ge}&\frac{p}{2}\int_{\lambda_{\rr}}^{M}\left(\lambda^{p/2}\snr{\{(x,y)\in B_{\rr}(z_{0})\colon\snr{y}^{-2/p}>2c_{p}\lambda\}}\right)^{\frac{2t}{p}}\frac{\d\lambda}{\lambda}\nonumber \\
&\ge&\frac{1}{c}\int_{\lambda_{\rr}}^{M}\frac{\d\lambda}{\lambda}=\frac{1}{c(p,t,\rr)}\log\left(\frac{M}{\lambda_{\rr}}\right),
\end{eqnarray}
which blows up as $M\to \infty$. As a consequence, $\ti{v}\in W^{1;\frac{p}{2},\infty}(Q_{p})\setminus W^{1;\frac{p}{2},t}_{\loc}(S_{p})$ for all $t\in (0,\infty)$, and a fortiori, $\ti{v}\in W^{1;\frac{p}{2},\infty}(Q_{p})\setminus W^{1,\infty}_{\loc}(Q_{p})$.\\\\
\noindent The outcome of Sections \ref{dm.s}-\ref{f.s} is summarize in the following theorem.
\begin{theorem}\label{t.1}
Let $p>2$ and $Q_{p}:=(-\ell_{p},\ell_{p})^{2}$ be the two-dimensional cube of side length $2\ell_{p}$, for some $\ell_{p}\equiv \ell_{p}(p)>0$. There exists a local minimizer $\ti{v}\in C^{0,1-\frac{2}{p}}(\bar{Q}_{p})\cap W^{1,1}(Q_{p})$ of functional $\mathcal{A}_{p}$ that is also a generalized minimizer of $\mathcal{A}_{p}$ with respect to its own boundary conditions such that $\ti{v}\in W^{1;\frac{p}{2},\infty}(Q_{p})\setminus W^{1,\infty}_{\loc}(Q_{p})$.
\end{theorem}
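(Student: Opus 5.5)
The statement of Theorem \ref{t.1} collects what Sections \ref{dm.s}--\ref{f.s} establish, so the plan is to assemble those steps in order.

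\textbf{Step 1: construction of the candidate.} Take $q=p/(p-1)\in(1,2)$ and invoke Theorem \ref{q.t}. This gives a local maximizer $u$ of $\mathcal{J}_{q}$ on $Q_{p}$ with the following properties:
\begin{itemize}
\item $u$ is analytic near $\bar Q_{p}$;
\item $\partial\ti J_{q}(Du)$ is weakly divergence free, by \eqref{0.36};
\item $\partial\ti J_{q}(Du)$ lies in $L^{p/2,\infty}$, with the bounds \eqref{0.27}.
\end{itemize}
We then define $\ti v$ by \eqref{0.39}, integrating the first stress component in $y$ from the light ray $\{y=0\}$. Since $p>2$, the singularity $\snr{y}^{-2/p}$ is integrable, so $\ti v$ is continuous and $\snr{\ti v}\lesssim\snr{y}^{1-2/p}$.

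\textbf{Step 2: identification of the gradient.} Away from $\{y=0\}$ the stress is smooth and divergence free, which gives \eqref{0.40}. Differentiating under the integral sign is justified by \eqref{0.41}. The boundary term at $\delta\to0$ vanishes because the second stress component satisfies $\ti{\tx J}^{2}_{q}=\tx O(\snr{y}^{1-2/p})$. Together these yield \eqref{0.42}, namely $D\ti v=\partial\ti J_{q}(Du)^{\perp}$ a.e. From this:
\begin{itemize}
\item $\ti v\in W^{1;\frac p2,\infty}(Q_{p})$, by \eqref{0.28};
\item $\ti v\in C^{0,1-2/p}(\bar Q_{p})$, by integrating $\snr{y}^{-2/p}$ along segments as in \eqref{0.73}.
\end{itemize}

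\textbf{Step 3: minimality.} Use the duality identity \eqref{0.99} and the fact that $z\mapsto z^{\perp}$ is an isometry commuting with the radial structure of $\partial A_{p}$. This gives $\partial A_{p}(D\ti v)=(Du)^{\perp}$, which is $C^{1}$ and curl-free-rotated, hence divergence free as in \eqref{0.44}. Convexity of $A_{p}$ then gives
\[
A_{p}(Dw)\ge A_{p}(D\ti v)+\langle\partial A_{p}(D\ti v),D(w-\ti v)\rangle .
\]
The linear term integrates to zero for $w-\ti v\in W^{1,1}_{0}(Q_{p})$ because $\partial A_{p}(D\ti v)=(Du)^{\perp}$ is bounded, continuous up to the boundary and divergence free; one approximates in $W^{1,1}_{0}$ by smooth compactly supported maps. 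Strict convexity gives uniqueness. Proposition \ref{rel.p} then upgrades $\ti v$ to the unique generalized minimizer.

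\textbf{Step 4: failure of Lipschitz regularity.} Use \eqref{0.71} together with $\mathcal C\approx1$ from \eqref{0.17} to obtain the lower bound \eqref{0.47}, $\snr{D\ti v}\gtrsim\snr{y}^{-2/p}$ near the ray. Then $D\ti v$ is unbounded in every ball centered on $\{y=0\}$, and \eqref{0.60} even rules out the finer Lorentz spaces.

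\textbf{Main obstacle.} The delicate point is Step 2: justifying that the weak gradient of $\ti v$ across $\{y=0\}$ is exactly the rotated stress, with no singular part concentrated on the ray. This hinges on the normal stress component vanishing at the ray, $\snr{y}^{1-2/p}\to0$, which is exactly where $p>2$ enters. I would treat it by cutting out the strip $\{\snr{y}<\delta\}$ and letting $\delta\to0$, as in the proof of \eqref{0.32}.
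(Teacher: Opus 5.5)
Your proposal is correct and follows essentially the same route as the paper. You build $\ti v$ by integrating the tangential stress component $\ti{\tx{J}}^{1}_{q}$ in $y$, and identify $D\ti v=\partial\ti J_{q}(Du)^{\perp}$ from the divergence-free stress off the ray together with $\ti{\tx{J}}^{2}_{q}=\tx{O}(\snr{y}^{1-2/p})\to0$. You then get minimality from $\partial A_{p}(D\ti v)=Du^{\perp}$ and convexity, and the failure of Lipschitz regularity from \eqref{0.47}; your explicit density argument for $W^{1,1}_{0}$ test functions, using the boundedness of $Du^{\perp}$, simply spells out a step the paper leaves implicit.
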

\begin{remark}
    \emph{The content of Section \ref{nl.m} has been Lean-formalized \href{https://github.com/seabiscs/sect_4_lean}{here}.}
\end{remark}
\section{Proof of Theorem \ref{t.2}}\label{pt.1}
\noindent In this section, we manipulate the example from Theorem \ref{t.1} to construct a Dirichlet problem driven by functional \eqref{0.1} defined on a smooth domain $\Omega\subset \mathbb{R}^{n}$ and with smooth boundary datum.  
\subsection{Construction of the domain}\label{dom.s} Here we construct an open, bounded domain $\Omega\subset \mathbb{R}^{n}$ with smooth boundary in arbitrary dimension $n\ge 2$ whose projection on $\mathbb{R}^{2}$ is compactly contained in $Q_{p}=(-\ell_{p},\ell_{p})^{2}\subset \mathbb{R}^{2}$, the cube from Theorem \ref{t.1}. Define the $C^{\infty}$-regular function 
\eqn{0.56}
$$
\mathcal{b}(t):=\begin{cases}
    \displaystyle
    \ \textnormal{sgn}(t)\exp\{-t^{-2}\}\quad &\mbox{if} \ \ t\not =0\vspace{1.5mm}\\
    \displaystyle
    \ 0\quad &\mbox{if} \ \ t=0,
\end{cases}
$$
identify $x\equiv(x_{1},\cdots,x_{n})\equiv (\bar{x},y)$, where $\bar{x}=x_{1}$, $y=x_{2}$ if $n=2$, or $\bar{x}=(x_{1},x_{3},\cdots,x_{n})$, $y=x_{2}$ when $n\ge  3$, introduce parameter $r:=\ell_{p}/2$, and the smooth maps
\eqn{0.49.1}
$$
\mathcal{s}(\bar{x}):=\frac{\snr{\bar{x}}^{2}-r^{2}}{r^{2}},\qquad \quad \Lambda(\bar{x},y):=\left(1+\frac{2y}{r}\right)^{2}+e\mathcal{b}(\mathcal{s}(\bar{x}))-1.
$$
Our domain is then given by 
$$\Omega:=\left\{x\equiv (\bar{x},y)\in \mathbb{R}^{n}\colon \Lambda(\bar{x},y)<0\right\}.$$
Notice that $\Omega$ is open as $\Lambda$ is continuous. Moreover, $\mathcal{s}(\bar{x})\ge -1$ and $\mathcal{s}\mapsto \mathcal{b}(\mathcal{s})$ is strictly increasing with $\mathcal{b}(\mathbb{R})\subset (-1,1)$, so $e\mathcal{b}(\mathcal{s}(\bar{x}))\ge -1$ and
$$
\Lambda(\bar{x},y)<0 \ \Longrightarrow \ 1>e\mathcal{b}(\mathcal{s}(\bar{x})) \ \Longrightarrow \ 1>\mathcal{s}(\bar{x}),
$$
so that 
\eqn{0.49}
$$
\Omega=\left\{x\equiv (\bar{x},y)\colon \snr{\bar{x}}<\sqrt{2}r, \ y\in (y_{-}(\bar{x}),y_{+}(\bar{x}))\right\},
$$
where
\eqn{0.67}
$$
y_{-}(\bar{x}):=-\frac{r}{2}\left(1+\sqrt{1-e\mathcal{b}(\mathcal{s}(\bar{x}))}\right),\qquad \quad y_{+}(\bar{x}):=\frac{r}{2}\left(\sqrt{1-e\mathcal{b}(\mathcal{s}(\bar{x}))}-1\right).
$$
Since by \eqref{0.56} we have $\sqrt{1-e\mathcal{b}(\mathcal{s}(\bar{x}))}\le\sqrt{2}$, by \eqref{0.49}-\eqref{0.67} $\Omega$ is bounded. In particular, $\{\snr{\bar{x}}<\sqrt{2}r\}\times \{-r/2\}\subset \Omega$, therefore we can show that $\Omega$ is path connected. In fact, for all $(\bar{x},y)\in \Omega$ we have $(\bar{x},-r/2)\in \{\snr{\bar{x}}<\sqrt{2}r\}\times \{-r/2\}$, so we fix any $\tau\in [0,1]$ and estimate
\begin{eqnarray*}
\Lambda\left(\bar{x},-\frac{r}{2}+\tau\left(y+\frac{r}{2}\right)\right)&=&\left(1+\frac{2}{r}\left(-\frac{r}{2}+\tau\left(y+\frac{r}{2}\right)\right)\right)^{2}+e\mathcal{b}(\mathcal{s}(\bar{x}))-1\nonumber \\
&\le&e\mathcal{b}(\mathcal{s}(\bar{x}))-1+\tau\left(1+\frac{2y}{r}\right)^{2}\nonumber \\
&\le& (1-\tau)\Lambda\left(\bar{x},-\frac{r}{2}\right)+\tau\Lambda(\bar{x},y)<0,
\end{eqnarray*}
so that every point of $\Omega$ can be joined to a point in $\{\snr{\bar{x}}<\sqrt{2}r\}\times \{-r/2\}$ with a segment. Being $\{\snr{\bar{x}}<\sqrt{2}r\}\times \{-r/2\}$ path connected, $\Omega$ is path connected as well. Now we only need to take care of the regularity of the boundary. In the light of the regular set theorem, we need to look at $D\Lambda(\bar{x},y)$ for all those points $(\bar{x},y)\in \mathbb{R}^{n}$ such that $\Lambda(\bar{x},y)=0$. Direct computations give
$$
D\Lambda(\bar{x},y)=\left(\frac{2e\mathcal{b}'(\mathcal{s}(\bar{x}))\bar{x}}{r^{2}},\frac{4}{r}\left(1+\frac{2y}{r}\right)\right),
$$
so if $y\not =-r/2$, $D\Lambda(\bar{x},y)\not =0$ and there is nothing else to prove, so we focus on the case $y=-r/2$. We then have $\Lambda(\bar{x},-r/2)=e\mathcal{b}(\mathcal{s}(\bar{x}))-1=0$, which implies that $\mathcal{s}(\bar{x})=1$, $\snr{\bar{x}}=\sqrt{2}r$ and $D\Lambda(\bar{x},-r/2)=(4\bar{x}r^{-2},0)\not =0$. As a consequence, $\partial \Omega$ is smooth. Next if $n=2$, \eqref{0.49} and the very definition of $r$ imply that $\Omega\Subset Q_{p}$, while if $n\ge 3$, we denote by $\pi_{1;2}$ the projection on the plane spanned by variables $(x_{1},y)$ to gain
\eqn{0.59}
$$
\pi_{1;2}(\bar{\Omega})\subseteq\left[-\sqrt{2}r,\sqrt{2}r\right]\times \left[-\frac{r(1+\sqrt{2})}{2},\frac{r(\sqrt{2}-1)}{2}\right]\stackrel{r=\ell_{p}/2}{\Subset}Q_{p}.
$$
Finally
\eqn{0.65}
$$
\begin{cases}
    \displaystyle
    \ \Omega\cap \{y=0\}=\{\snr{\bar{x}}<r\}\times\{0\}\vspace{1.5mm}\\
    \displaystyle
    \ \partial\Omega\cap \{y=0\}=\{\snr{\bar{x}}=r\}\times \{0\},
\end{cases}
$$
which, specialized for $n=2$ reads as
\eqn{0.66}
$$
\begin{cases}
    \displaystyle
    \ \Omega\cap \{y=0\}=(-r,r)\times\{0\}\subset (-\ell_{p},\ell_{p})\times \{0\}\vspace{1.5mm}\\
    \displaystyle
    \ \partial\Omega\cap \{y=0\}=\{(\pm r,0)\},
\end{cases}
$$
follow from $\Lambda(\bar{x},0)=e\mathcal{b}(\mathcal{s}(\bar{x}))$, thus $\Lambda(\bar{x},0)<0$ implies via \eqref{0.56} that $\mathcal{s}(\bar{x})<0$, i.e. $\snr{\bar{x}}<r$, while $\Lambda(\bar{x},0)=0$ yields $\snr{\bar{x}}=r$, and \eqref{0.65} is proven.

\subsection{Smooth boundary traces}\label{tr.s} Here we construct a suitable $C^{\infty}$-regular boundary datum for problem \eqref{pd}. Before entering into the details, let us extend to higher dimension the minimizer $\ti{v}$ constructed in Theorem \ref{t.1}. Keeping in mind the notation fixed below \eqref{0.56}, with $x\in \bar{\Omega}$, if $n=2$ we just rename $v(x)\equiv v(\bar{x},y):=\ti{v}(x_{1},y)$ without further changes, while if $n\ge 3$, with $\pi_{1;2}$ being the $2$-d projection from \eqref{0.59} and set $v(x)\equiv v(\bar{x},y):=\ti{v}(\pi_{1;2}(x))=\ti{v}(x_{1},y)$ - a meaningful position as if $(\bar{x},y)\in \bar{\Omega}$, then $(x_{1},y)\in \pi_{1;2}(\bar{\Omega})\Subset Q_{p}$, cf. \eqref{0.59}. Notice that by definition it is 
\eqn{0.72}
$$Dv=(\partial_{x_{1}}\ti{v},\partial_{x_{2}}\ti{v},0,\cdots,0) \ \Longrightarrow \ \snr{Dv}=\snr{D\ti{v}}\qquad \mbox{a.e. in} \ \ \Omega.$$
Let us rearrange \eqref{0.39} in a more convenient way:
\begin{eqnarray}\label{0.53}
v(x)&=&\ti{v}(x_{1},y)=\int_{0}^{y}\ti{\tx{J}}^{1}_{q}(x_{1},\tau)\d\tau=\int_{0}^{1}y\ti{\tx{J}}^{1}_{q}(x_{1},\tau y)\d\tau\nonumber \\
&\stackrel{\eqref{0.26}}{=}&\textnormal{sgn}(y)\snr{y}^{1-2/p}\left(\int_{0}^{1}\tau^{-2/p}\mathcal{S}(x_{1},\tau y)\Gamma_{1}(x_{1},\tau y)\d\tau\right)=:\textnormal{sgn}(y)\snr{y}^{1-2/p}\mathcal{K}(x_{1},y),
\end{eqnarray}
so that the decay in $y$ is explicit, cf. \eqref{0.10}. By \eqref{0.65}-\eqref{0.66}, $\partial \Omega$ crosses $\{y=0\}$, i.e., the only set where $D\ti{v}$ can be unbounded, in $\{\snr{\bar{x}}=r\}\times \{0\}$ - specifically, the upper component $y_{+}$ of $\partial \Omega$ in \eqref{0.49}-\eqref{0.67} crosses $\{\snr{\bar{x}}=r\}\times\{0\}$ as it satisfies $\left.y_{+}(\bar{x})=0\right|_{\{\snr{\bar{x}}=r\}}$. As $\left.\mathcal{b}(\mathcal{s}(\bar{x}))\right|_{\{\snr{\bar{x}}=r\}}=0$, it is $\sqrt{1-e\mathcal{b}(\mathcal{s}(\bar{x}))}=1$ for all $\bar{x}\in \{\snr{\bar{x}}=r\}$ in \eqref{0.67}, therefore there is a neighborhood $N_{+}\subset \mathbb{R}^{n-1}$ of $\{\snr{\bar{x}}=r\}$ such that $y_{+}\in C^{\infty}(N_{+})$. Observe that
\eqn{0.54}
$$
\begin{cases}
\displaystyle
\ y_{+}(\bar{x})(y_{+}(\bar{x})+r)=\left(\frac{r}{2}\right)^{2}\left(\sqrt{1-e\mathcal{b}(\mathcal{s}(\bar{x}))}-1\right)\left(1+\sqrt{1-e\mathcal{b}(\mathcal{s}(\bar{x}))}\right)=-e\mathcal{b}(\mathcal{s}(\bar{x}))\left(\frac{r}{2}\right)^{2}\vspace{1.5mm}\\
\displaystyle
\ y_{+}(\bar{x})+r>0.
\end{cases}
$$
By \eqref{0.56}, $\eqref{0.49.1}$, and \eqref{0.54} it is $\textnormal{sgn}(y_{+})=-\textnormal{sgn}(\mathcal{s})$ and
\eqn{0.57}
$$
\textnormal{sgn}(y_{+})\snr{y_{+}}^{1-2/p}= -\left(\frac{er^{2}}{4}\right)^{1-\frac{2}{p}}\frac{\mathcal{b}_{p}(\mathcal{s}(\bar{x}))}{(y_{+}(\bar{x})+r)^{1-2/p}},
$$
where we set
$$
\mathcal{b}_{p}(\mathcal{s}(\bar{x})):=\begin{cases}
\displaystyle
\ \textnormal{sgn}(\mathcal{s}(\bar{x}))\exp\left\{-\left(1-\frac{2}{p}\right)\mathcal{s}(\bar{x})^{-2}\right\}\quad &\mbox{if} \ \ \mathcal{s}(\bar{x})\not =0\vspace{1.5mm}\\
\displaystyle
\ 0\quad &\mbox{if} \ \ \mathcal{s}(\bar{x})=0.\end{cases}
$$
Combining \eqref{0.53} and \eqref{0.57}, we obtain
\eqn{0.58}
$$
v(\bar{x},y_{+}(\bar{x}))=-\left(\frac{er^{2}}{4}\right)^{1-\frac{2}{p}}\frac{\mathcal{b}_{p}(\mathcal{s}(\bar{x}))\mathcal{K}(x_{1},y_{+}(\bar{x}))}{(y_{+}(\bar{x})+r)^{1-2/p}}.
$$
The factors on the right-hand side of \eqref{0.58} are smooth, cf. \eqref{0.53} and Section \ref{ss.s}, and the multiplicative factor $\mathcal{b}_{p}(\mathcal{s}(\bar{x}))$, which vanishes at $\mathcal{s}=0$ of any order, assures that all tangential derivatives vanish on the contact set $\{\snr{\bar{x}}=r\}$. On the rest of $\partial \Omega$ we have $\snr{y}>0$ so $v$ is $C^{\infty}$-regular - recall that $\mathcal{K}$ is smooth as $\mathcal{S}$, $\Gamma_{1}$ are analytic see Section \ref{ae.s}, and $\tau^{-2/p}\in L^{1}(0,1)$, therefore we can define $\bar{v}_{0}:=\left.v\right|_{\partial \Omega}\in C^{\infty}(\partial\Omega)$, and extend\footnote{Project $\bar{v}_{0}$ on a tubular neighborhood of $\partial \Omega$ and cut-off it so to construct function $v_{0}\in C^{\infty}(\mathbb{R}^{n})$ such that $\left.v_{0}\right|_{\partial\Omega}=\left.\bar{v}_{0}\right|_{\partial \Omega}$.} it in a standard way to a smooth map $v_{0}\in C^{\infty}(\mathbb{R}^{n})$ such that $\left.v_{0}\right|_{\partial\Omega}=\left.\bar{v}_{0}\right|_{\partial\Omega}.$ By Theorem \ref{t.1} it is $\ti{v}\in C^{0,1-2/p}(\bar{Q}_{p})\cap W^{1;\frac{p}{2},\infty}(Q_{p})$ so via \eqref{0.59} we get that $v\in C^{0,1-2/p}(\bar{\Omega})\cap W^{1;\frac{p}{2},\infty}(\Omega)$. In fact, the $(1-2/p)$-H\"older continuity is a direct consequence of the definition of $v$ and \eqref{0.73}. Concerning the Sobolev-Marcinkiewicz regularity, by \eqref{0.49}, \eqref{0.67}, and \eqref{0.59} we see that 
\eqn{0.74}
$$\Omega\Subset Q_{p}\times (-2r,2r)^{n-2}=:\ti{Q}_{n}$$ so for all $\lambda>0$ it is
\eqn{0.75}
$$
\lambda^{\frac{p}{2}}\left|\left\{x\in \Omega\colon \snr{Dv(x)}>\lambda\right\}\right|\stackrel{\eqref{0.72}}{\le}c\lambda^{\frac{p}{2}}\left|\left\{(x_{1},x_{2})\in Q_{p}\colon \snr{D\ti{v}(x_{1},x_{2})}>\lambda\right\}\right|\stackrel{\eqref{0.71},\eqref{0.37}}{\le}c(n,p).
$$
In particular, the Sobolev trace of $v$ agrees with $\bar{v}_{0}$, cf. \cite[Section 5.5, Theorems 1-2]{eva10}, that is
\eqn{0.70}
$$\left.v_{0}\right|_{\partial\Omega}=\left.\bar{v}_{0}\right|_{\partial\Omega}=\left.v\right|_{\partial\Omega}\qquad \mbox{and}\qquad v-v_{0}\in W^{1,1}_{0}(\Omega).$$
The smooth boundary datum is constructed.
\subsection{Non-Lipschitz solutions} Let us prove that $v$ solves Dirichlet problem \eqref{pd}. We already saw in \eqref{0.75}-\eqref{0.70} that $v\in W^{1;\frac{p}{2},\infty}(\Omega)$ attains the smooth trace $v_{0}$ on $\partial \Omega$, so we only need to show its minimality in Dirichlet class $v_{0}+W^{1,1}_{0}(\Omega)$. We have
\begin{eqnarray}\label{0.46}
\partial A_{p}(Dv)&=&\frac{\snr{Dv}^{p-2}Dv}{(1+\snr{Dv}^{p})^{1-1/p}}\nonumber \\
&\stackrel{\eqref{0.72}}{=}&\frac{\snr{D\ti{v}}^{p-2}}{(1+\snr{D\ti{v}}^{p})^{1-1/p}}\left(\partial_{x_{1}}\ti{v},\partial_{x_{2}}\ti{v},0,\cdots,0\right)=\left(\partial A_{p}(D\ti{v}),0,\cdots,0\right),
\end{eqnarray}
a.e. in $\Omega$. We take $\varphi\in W^{1,1}_{0}(\Omega)$ and extend it (without relabelling) as zero in $\ti{Q}_{n}\setminus \Omega$, see \eqref{0.74}, so that $\varphi\in W^{1,1}_{0}(\ti{Q}_{n})$. We then observe that $v+\varphi\in v_{0}+W^{1,1}_{0}(\Omega)$ and $\varphi \equiv 0$ in $\ti{Q}_{n}\setminus \Omega$, and use the convexity of $A_{p}$, \eqref{0.46}, \eqref{0.49}, \eqref{0.59}, \eqref{0.44} to bound
\begin{eqnarray*}
\mathcal{A}_{p}(v+\varphi;\Omega)-\mathcal{A}_{p}(v;\Omega)&\ge& \int_{\Omega}\langle\partial A_{p}(Dv),D\varphi\rangle\dx=\int_{\ti{Q}_{n}}\langle\partial A_{p}(D\ti{v}),D_{(x_{1},x_{2})}\varphi\rangle\dx\nonumber \\
&=&\int_{(-2r,2r)^{n-2}}\left(\int_{Q_{p}}\langle\partial A_{p}(D\ti{v}),D_{(x_{1},x_{2})}\varphi\rangle\dx_{1}\dx_{2}\right)\dx_{3}\cdots\dx_{n}=0,
\end{eqnarray*}
and $v\in \mathcal{D}_{0}(v_{0};\Omega)$ is the unique (by $\eqref{0.94}_{1}$) solution to \eqref{pd}. Proposition \ref{rel.p} then yields that $v$ is also the unique generalized minimizer of $\mathcal{A}_{p}$ in class $\mathcal{D}_{0}(v_{0};\Omega)$. We conclude by showing that $v\in W^{1;\frac{p}{2},\infty}(\Omega)\setminus W^{1,\infty}_{\loc}(\Omega)$. If $n=2$, there is nothing to prove by \eqref{0.37}, \eqref{0.71} and \eqref{0.60}. On the other hand, if $n\ge 3$ by \eqref{0.75} it is $v\in W^{1;\frac{p}{2},\infty}(\Omega)$, so we only need to show the failure of local Lipschitz regularity. Fix any point $x_{0}\equiv (\bar{x}_{0},0)\equiv (x_{0;1},0,\cdots,x_{0;n})\in \Omega\cap \{y=0\}$, radius $\rr\in (0,\dist(x_{0},\partial \Omega))$, so that $B_{\rr}(x_{0})\Subset \Omega$. With reference to Section \ref{f.s}, we set $\rr_{p}:=\min\{\rr,\sigma_{p}\}/4$ and introduce cylinder $C_{\rr_{p}}(x_{0}):=\{\bar{x}\in \mathbb{R}^{n-1}\colon \snr{\bar{x}-\bar{x}_{0}}<\rr_{p}\}\times \{-\rr_{p}<x_{2}<\rr_{p}\}\Subset B_{\rr}(x_{0})\Subset \Omega$. Notice that $\pi_{1;2}(C_{\rr_{p}}(x_{0}))\Subset \pi_{1;2}(B_{\rr}(x_{0}))\Subset Q_{p}$ and, in particular, $\snr{x_{2}}<\sigma_{p}$, so \eqref{0.47} and \eqref{0.72} yield that $\snr{Dv(x)}\ge (2c_{p})^{-1}\snr{x_{2}}^{-2/p}$ a.e. on $C_{\rr_{p}}(x_{0})$. We then have
$$
C_{\rr_{p}}(x_{0})\cap \left\{0<\snr{x_{2}}<\min\left\{\rr_{p},(2c_{p}\lambda)^{-p/2}\right\}\right\}\subseteq \left\{x\in B_{\rr}(x_{0})\colon \snr{Dv(x)}>\lambda\right\},
$$
therefore for $\lambda\ge (2c_{p}\rr_{p}^{2/p})^{-1}$ we obtain
$$
\left|\left\{x\in B_{\rr}(x_{0})\colon \snr{Dv(x)}>\lambda\right\}\right|\ge \left|C_{\rr_{p}}(x_{0})\cap \left\{0<\snr{x_{2}}<(2c_{p}\lambda)^{-p/2}\right\}\right|\approx_{n,p,\rr}\lambda^{-\frac{p}{2}}
$$
so that, proceeding as in \eqref{0.60}, we get that $v\in W^{1;\frac{p}{2},\infty}(\Omega)\setminus W^{1,\infty}_{\loc}(\Omega)$. Let us conclude with the observation that set $\Omega\cap\{y=0\}$, cf. $\eqref{0.65}_{1}$, is made of non-Lebesgue points of $Dv$. Let $n\ge 2$, $x_{0}\in \Omega\cap\{y=0\}$, $B_{\rr}(x_{0})\Subset \Omega$, recall that $Q_{\frac{\rr}{\sqrt{n}}}(x_{0})\subseteq B_{\rr}(x_{0})\subseteq Q_{\rr}(x_{0})$, and estimate
\begin{eqnarray*}
\snr{(Dv)_{B_{\rr}(x_{0})}}&\stackrel{\eqref{0.72}}{\ge}&\snr{(\partial_{x_{2}}\ti{v})_{B_{\rr}(x_{0})}}\nonumber \\
&\stackrel{\eqref{0.42},\eqref{0.38.1}}{\ge}&\left|\mint_{B_{\rr}(x_{0})}\left(-\frac{q\Gamma_{0}(x_{1},0)}{2}\right)^{-\frac{1}{p}}\frac{1}{\snr{x_{2}}^{2/p}}\dx_{1}\cdots\dx_{n}\right|-c\left|\mint_{B_{\rr}(x_{0})}\snr{x_{2}}^{2-\frac{2}{p}}\dx_{1}\cdots\dx_{n}\right|\nonumber \\
&\ge&\frac{1}{c}\left|\mint_{Q_{\frac{\rr}{\sqrt{n}}}(x_{0})}\left(-\frac{q\Gamma_{0}(x_{1},0)}{2}\right)^{-\frac{1}{p}}\frac{1}{\snr{x_{2}}^{2/p}}\dx_{1}\cdots\dx_{n}\right|\nonumber \\
&&-c\left|\mint_{Q_{\rr}(x_{0})}\snr{x_{2}}^{2-\frac{2}{p}}\dx_{1}\cdots\dx_{n}\right|\nonumber \\
&\ge&\frac{1}{c\rr}\left(\mint_{-\frac{\rr}{\sqrt{n}}}^{\ \ \frac{\rr}{\sqrt{n}}}\left(-\frac{q\Gamma_{0}(x_{0;1}-x_{1},0)}{2}\right)^{-\frac{1}{p}}\dx_{1}\right)\left(\int_{-\frac{\rr}{\sqrt{n}}}^{\ \ \frac{\rr}{\sqrt{n}}}\frac{\dx_{2}}{\snr{x_{0;2}-x_{2}}^{2/p}}\right)-c\rr^{2-\frac{2}{p}}\nonumber\\
&\ge&\frac{1}{c\rr^{2/p}}\left(\mint_{-\frac{\rr}{\sqrt{n}}}^{\ \ \frac{\rr}{\sqrt{n}}}\left(-\frac{q\Gamma_{0}(x_{0;1}-x_{1},0)}{2}\right)^{-\frac{1}{p}}\dx_{1}\right)-c\rr^{2-\frac{2}{p}},
\end{eqnarray*}
with $c\equiv c(n,p)$. Taking the $\liminf$ as $\rr\to 0$ above we achieve
\begin{eqnarray*}
\liminf_{\rr\to 0}\snr{(Dv)_{B_{\rr}(x_{0})}}&\ge&\frac{1}{c}\left(\liminf_{\rr\to 0}\rr^{-\frac{2}{p}}\right)\left(\liminf_{\rr\to0}\mint_{-\frac{\rr}{\sqrt{n}}}^{\ \ \frac{\rr}{\sqrt{n}}}\left(-\frac{q\Gamma_{0}(x_{0;1}-x_{1},0)}{2}\right)^{-\frac{1}{p}}\dx_{1}\right)\nonumber \\
&&-c\limsup_{\rr\to 0}\rr^{2-\frac{2}{p}}\nonumber \\
&\ge&\frac{1}{c}\left(-\frac{q\Gamma_{0}(x_{0;1},0)}{2}\right)^{-\frac{1}{p}}\left(\liminf_{\rr\to 0}\rr^{-\frac{2}{p}}\right)=\infty,
\end{eqnarray*}
where we also used that $-\Gamma_{0}$ is analytic and strictly positive, cf. \eqref{0.15} and \eqref{0.17}. This means that no $x_{0}\in \Omega\cap \{y=0\}$ can be a Lebesgue point of $Dv$. On the other hand, \eqref{0.42}, \eqref{0.72}, \eqref{0.26} and the analyticity of $\Gamma_{1}$, $\Gamma_{2}$, and $\mathcal{S}$ assure that $v$ is smooth in $\Omega\cap \{y\not =0\}$, therefore the set of non-Lebesgue point of $Dv$ is precisely $\Omega\cap \{y=0\}$, and $\mathcal{H}^{n-1}(\Omega\cap \{y=0\})=\mathcal{H}^{n-1}(\{\snr{\bar{x}}<r\}\times\{y=0\})\approx_{n}r^{n-1}>0$, cf. \eqref{0.65}-\eqref{0.66}. The proof is complete.
\begin{remark}\label{rmnc}
    \emph{Theorem \ref{t.2} does not contradict the global result in \cite{tau78}. Indeed, the fact that $\mathcal{b}$ vanishes of infinite order at zero implies $D_{\bar{x}}y_{+}=D^{2}_{\bar{x}}y_{+}=0$ on $\{\snr{\bar{x}}=r\}$, and so the full second fundamental form and the mean curvature of $\partial \Omega$ vanish along $\{\snr{\bar{x}}=r\}\times \{0\}$. Thus the strict mean-convexity hypothesis in \cite[Theorem 6, (iv)]{tau78} is violated. Also the non-strict, $C^{3}$-variant in \cite[Remarks, p.~168]{tau78} does not hold here, as the mean curvature of the upper boundary, with respect to the inward normal convention of \cite{tau78}, is negative at $(\bar{x},y_{+}(\bar{x}))$ for all $\snr{\bar{x}}<r$ is sufficiently close to $r$. Furthermore, $\Omega$ is not convex: at every point $(\bar{x}_{0},0)$ with $\snr{\bar{x}_{0}}=r$, the tangent hyperplane is $\{y=0\}$, while, for $(\bar{x},y_{+}(\bar{x}))$ belonging to a neighborhood of $\{\snr{\bar{x}}=r\}\times \{y=0\}$ we have $y_{+}(\bar{x})>0$ if $\snr{\bar{x}}<r$ and $y_{+}(\bar{x})<0$ for $\snr{\bar{x}}>r$, and this tangent hyperplane is not supporting.}
\end{remark}
\begin{remark}
    \emph{The content of Sections \ref{dom.s}-\ref{tr.s} has been Lean-formalized \href{https://github.com/seabiscs/def26_domain_trace_lean}{here}.}
\end{remark}

\subsection*{AI Statement} ChatGPT (OpenAI, v.~5.6) was used to double-check the proofs and Lean-formalize some arguments. Mathematical ideas belong to the author, who takes responsibility of the content of the paper.
\end{document}